\newcommand{\N}{\mathbb N}
\newcommand{\R}{\mathbb{R}}
\newcommand{\CC}{\mathbb{C}}
\newcommand{\Z}{\mathbb{Z}}
\newcommand{\proj}{\mathbb{P}}
\newcommand{\GL}{\mathrm{GL}}
\newcommand{\E}{\mathbb{E}\,}
\newcommand{\vol}{\mathrm{vol}}
\newcommand{\cM}{\mathcal{M}}
\newcommand{\s}{\sigma}
\newcommand{\e}{\varepsilon}
\newcommand{\diag}{\mathrm{diag}} 
\newcommand{\inte}{\mathrm{int}} 
\newcommand{\cone}{\mathrm{cone}} 
\newcommand{\conv}{\mathrm{conv}} 
\newcommand{\Vt}{\mathrm{Vert}} 
\newcommand{\Gr}{\mathrm{Gr}} 
\newcommand{\cL}{\mathcal{L}} 
\newcommand{\caM}{\mathcal{M}}
\newtheorem{thm}{Theorem}
\newtheorem{lemma}[thm]{Lemma}
\newtheorem{prop}[thm]{Proposition}
\newtheorem{defi}[thm]{Definition}
\newtheorem{conj}{Conjecture}
\theoremstyle{remark} 
\newtheorem{remark}[thm]{Remark}
\newtheorem{example}[thm]{Example}
\numberwithin{equation}{section}
\numberwithin{thm}{section}
\begin{document}

\title{Real zeros of mixed random fewnomial systems}


\author{Peter B\"urgisser}
\affiliation{%
  \institution{Institute of Mathematics, Technische Universit\"at Berlin}
  \streetaddress{Strasse des 17. Juni 136}
  \city{Berlin}
  \postcode{10623}
  \country{Germany}} 
\email{pbuerg@math.tu-berlin.de} 

\renewcommand{\shortauthors}{B\"urgisser}

\begin{abstract}
Consider a system $f_1(x)=0,\ldots,f_n(x)=0$ 
of $n$ random real polynomials in $n$ variables,
where each $f_i$ has a prescribed set of exponent vectors in a set
$A_i \subseteq \mathbb{Z}^n$ of cardinality~$t_i$, whose convex hull is denoted $P_i$. 
Assuming that the coefficients of the $f_i$ are independent standard Gaussian, 
we prove that the expected number of zeros of the random system in the positive orthant is 
at most 
$(2\pi)^{-\frac{n}{2}} \, V_0\, (t_1-1)\ldots (t_n-1)$. 
Here $V_0$ denotes the number of vertices of the Minkowski sum $P_1+\ldots + P_n$.
However, this bound does not improve over the bound in \cite{BETC:19} for the unmixed case,
where all supports $A_i$ are equal.
All arguments equally work for real exponent vectors. 
\end{abstract}

\begin{CCSXML}
<ccs2012>
<concept>
<concept_id>10003752.10010061.10010063</concept_id>
<concept_desc>Theory of computation~Computational geometry</concept_desc>
<concept_significance>300</concept_significance>
</concept>
<concept>
<concept_id>10002950.10003741.10003732.10003735</concept_id>
<concept_desc>Mathematics of computing~Integral calculus</concept_desc>
<concept_significance>300</concept_significance>
</concept>
</ccs2012>
\end{CCSXML}

\ccsdesc[300]{Theory of computation~Computational geometry}
\ccsdesc[300]{Mathematics of computing~Integral calculus}

\keywords{fewnomials, random polynomials, real algebraic geometry, sparsity} 


\maketitle


\section{Introduction}

In many applications, we want to understand or find the positive real
solutions of a system of multivariate polynomial equations, e.g., see
\cite{drton-sturmfels:09,horn-jackson:72,sottile-book:11}. 
Bezout's theorem, which bounds the number complex zeros in terms of degrees, 
usually highly overestimates the number of real zeros. This can be already
seen from Descartes' rule of signs~\cite[p.~42]{descartes}, which 
implies that a real univariate polynomial with $t$ terms 
has at most $t-1$ positive zeros. 
In 1980, Khovanskii~\cite{kho:80} 
obtained a far reaching generalization of Descartes' rule. 
He showed that  the number of nondegenerate\footnote{i.e., the Jacobian of the system does not vanish at the zero.}
positive solutions
of a system $f_1(x)=0,\ldots,f_n(x)=0$ 
of $n$ real polynomial equations in~$n$ variables is bounded only in terms of~$n$
and the number $t$ of distinct exponent vectors occurring in the system.
This result in fact allows for any real exponents.
Following Kushnirenko, one speaks of {\em fewnomial systems},
with the idea that the number $t$ of terms is small, see~\cite{kho:91}. 

Understanding the complex zeros of fewnomial systems 
is much simpler: the famous BKK-Theorem~\cite{bernstein:75,kushnirenko:76} 
states that for given finite supports $A_1,\ldots,A_n\subseteq\Z^n$ 
and Laurent polynomials $f_i(x) =\sum_{a\in A} c_i(a) x_1^{a_1}\cdots x_n^{a_n} $ 
with generic complex coefficients $c_i(a)$, 
the number of complex solutions in $(\CC^\times)^n$ of 
a corresponding system $f_1(x)=0,\ldots,f_n(x)=0$ 
is given by $n!$ times the mixed volume of the Newton polytopes
$P_1,\ldots,P_n$, where $P_i$ is defined as the convex hull of~$A_i$. 

Note that the number of real zeros has little to do with the metric properties of $P_i$:
indeed, replacing $A_i$ by a nonzero multiple $m_iA_i$ 
amounts to substituting $x_i$ by $x_i^{m_i}$. 
Clearly, this does not change the number of 
positive real zeros of a fewnomial system, however $P_i$ has been 
replaced by $m_i P_i$. 

The bound on the number of real zeros obtained by Khovanskii 
is exponential in the number~$t$. 
It is widely conjectured that this bound is far from optimal: 
in fact it is conjectured~\cite{rojas_phillipson} that for fixed~$n$, 
the number of nondegenerate positive solutions 
of a fewnomial system with $t$ exponent vectors is bounded by a polynomial in $t$. 
Quite surprisingly, this question is open even for $n=2$! 
For results in special cases, we refer to 
\cite{biha_so:07,avendano,sottile-book:11,ko-po-ta:15,KPS:15,paul}.
Moreover, there is a very interesting connection to complexity theory \cite{koiran,bu-bri:18}. 

Given this state of affairs of real fewnomial theory, a possible way to advance is to ask 
what happens in generic situations. This can be made formal by considering {\em random} 
real fewnomial systems, see \cite{Bez2,edel_kost,rojas_avg,malajovich,BETC:19,malajovich:22}. 
Fix supports $A_1,\ldots,A_n\subseteq\Z^n$ of cardinality~$t_1,\ldots,t_n$, respectively, 
and consider a system of $n$~random polynomials $f_i(x)$ as above, 
but now the coefficients $c_i(a)$ are assumed to be independent standard Gaussian. 
Let us denote by $\E(A_1,\ldots,A_n)$ the expectation of the number of 
nondegenerate positive real zeros of such system. 
Actually, we work in more generality, allowing any subsets $A_i$ of $\R^n$;  
see Section~\ref{se:mixed}. 



In~\cite{BETC:19} it was proven that 
$\E(A,\ldots,A) \le 2^{1-n}\binom{t}{n}$. 
The main result of the present paper 
is an extension of this to the mixed case, 
where the fewnomials may have have different supports~$A_i$. 
Our bound depends on the combinatorial structure of the 
Minkowski sum $P_1+\ldots + P_n$ through the number of its vertices. 
We remark that our proof is quite different from the one in~\cite{BETC:19}, 
which is rather indirect. 
Clearly, the number $V_0(P_i)$ of vertices of $P_i$ is at most~$t_i$.
Moreover, $V_0(P_1+\ldots + P_n)\le V_0(P_1)\cdots V_0(P_n)$ 
and this bound is known to be sharp~\cite{fukuda-weibel:07}.



\begin{thm}\label{th:main-mixed-real}
If the $A_i\subseteq\R^n$ are finite nonempty sets of cardinality~$t_i$
and with convex hull~$P_i$, 
for $i=1,\ldots,n$, then 
$$
 \E(A_1,\ldots,A_n) \ \le\ 
  (2\pi)^{-\frac{n}{2}} \, V_0\, (t_1-1)\ldots (t_n-1) . 
$$
Here $V_0$ denotes the number of vertices of the Minkowski sum $P:=P_1+\ldots + P_n$.
\end{thm}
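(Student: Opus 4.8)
The plan is to evaluate $\E(A_1,\ldots,A_n)$ by the Kac--Rice formula. After the change of coordinates $x=\exp(u)$, which maps the positive orthant diffeomorphically onto $\R^n$ and turns $f_i$ into the exponential sum $g_i(u)=\sum_{a\in A_i}c_i(a)\,e^{\langle a,u\rangle}$, one has to count the zeros of $g=(g_1,\ldots,g_n)\colon\R^n\to\R^n$. Since the $c_i(a)$ are independent standard Gaussian, $g(u)$ is a centered Gaussian vector with diagonal covariance, the $i$-th variance being $\s_i(u)^2=\sum_{a\in A_i}e^{2\langle a,u\rangle}$; and, conditioned on $g(u)=0$, the rows of $Dg(u)$ are independent, row $i$ being $N(0,\s_i(u)^2C_i(u))$, where $C_i(u)$ is the covariance matrix of the probability measure $p_i(u)$ on $A_i$ with weights proportional to $e^{2\langle a,u\rangle}$. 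In the Kac--Rice integral the factors $\s_i(u)$ cancel against the density of $g(u)$ at $0$, leaving
$$
  \E(A_1,\ldots,A_n)\;=\;(2\pi)^{-n/2}\int_{\R^n}\E\bigl|\det(\eta_1(u),\ldots,\eta_n(u))\bigr|\,du ,
$$
with independent $\eta_i(u)\sim N(0,C_i(u))$.

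Next I would decompose $\R^n$ into the full-dimensional normal cones $N_v$ of $P=P_1+\cdots+P_n$, one per vertex $v$, and bound $\int_{N_v}\E|\det(\eta_1,\ldots,\eta_n)|\,du$ by $\prod_i(t_i-1)$. Writing $v=v_1+\cdots+v_n$ with $v_i\in\Vt(P_i)$ and translating $A_i$ by $-v_i$ (a harmless monomial multiplication), we may assume $0=v_i\in A_i$ and $\langle a,u\rangle\le 0$ for all $a\in A_i$ and all $u\in N_v$. Because $0\in A_i$, the covariance is dominated in the Loewner order by the second-moment matrix $M_i(u)=\sum_{a\in A_i\setminus\{0\}}p_i(u)(a)\,aa^{T}$, a sum of only $t_i-1$ rank-one terms. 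Since $\E|\det(\cdot)|$ of a tuple of independent centered Gaussians is monotone in each covariance in the Loewner order (a conditional-Jensen argument: replace $N(0,C_i)$ by $N(0,C_i)$ plus an independent $N(0,M_i-C_i)$ and expand the determinant multilinearly), we get $\E|\det(\eta_1,\ldots,\eta_n)|\le\E|\det(\zeta_1,\ldots,\zeta_n)|$ for $\zeta_i=\sum_{a\in A_i\setminus\{0\}}\sqrt{p_i(u)(a)}\,g_{i,a}\,a$ with independent standard Gaussians $g_{i,a}$; expanding this determinant multilinearly and using orthonormality of the $g_{i,a}$ yields
$$
  \E|\det(\zeta_1,\ldots,\zeta_n)|\;\le\;\Bigl(\sum_{\vec a}\,\prod_i p_i(u)(a^{(i)})\,\det(\vec a)^2\Bigr)^{1/2}\;\le\;\sum_{\vec a}|\det(\vec a)|\prod_i\sqrt{p_i(u)(a^{(i)})},
$$
the sum being over $\vec a=(a^{(1)},\ldots,a^{(n)})\in\prod_i(A_i\setminus\{0\})$, which has exactly $\prod_i(t_i-1)$ terms.

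It then remains to show $|\det(\vec a)|\int_{N_v}\prod_i\sqrt{p_i(u)(a^{(i)})}\,du\le 1$ for each $\vec a$; summing over the $\prod_i(t_i-1)$ choices and over the $V_0$ cones finishes the proof. Terms with $\det(\vec a)=0$ vanish, so assume $a^{(1)},\ldots,a^{(n)}$ are linearly independent and substitute $s_i=-\langle a^{(i)},u\rangle$: this linear change of variables has Jacobian $|\det(\vec a)|$, cancelling the prefactor, and it maps $N_v$ into the positive orthant because $\langle a^{(i)},u\rangle\le 0$ on $N_v$. As $0,a^{(i)}\in A_i$ we have $\s_i(u)^2\ge 1+e^{2\langle a^{(i)},u\rangle}$, hence $p_i(u)(a^{(i)})\le(1+e^{2s_i})^{-1}$, and therefore
$$
  |\det(\vec a)|\int_{N_v}\prod_i\sqrt{p_i(u)(a^{(i)})}\,du\;\le\;\prod_{i=1}^{n}\int_0^\infty\frac{ds}{\sqrt{1+e^{2s}}}\;=\;\bigl(\ln(1+\sqrt{2})\bigr)^n\;<\;1 .
$$
This gives $\E(A_1,\ldots,A_n)\le(2\pi)^{-n/2}(\ln(1+\sqrt2))^n\,V_0\prod_i(t_i-1)\le(2\pi)^{-n/2}\,V_0\prod_i(t_i-1)$, in fact with a factor $(\ln(1+\sqrt2))^n$ to spare, which is consistent with the bound being far from optimal in the unmixed case; the degenerate cases ($P$ not full-dimensional, or some $t_i=1$) produce no isolated positive zeros and are trivial. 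The point that needs the most care — and the only place where the combinatorics really enters — is finding the right intermediate object: passing from $C_i(u)$ to the second-moment matrix $M_i(u)$ is free yet reduces the number of generators from $\binom{t_i}{2}$ to exactly $t_i-1$, and this works precisely because we have arranged $0\in A_i$ after the vertex-dependent shift; together with the identification of the normal fan of $P$ as the common refinement of the normal fans of the $P_i$ (which is what makes $\langle a,u\rangle\le 0$ hold on each $N_v$), this is what makes the final one-dimensional integral converge to a constant below $1$.
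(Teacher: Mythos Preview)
Your proof is correct and even yields a constant sharper by a factor of $(\ln(1+\sqrt{2}))^n\approx(0.881)^n$. The overall architecture matches the paper's: both express $\E(A_1,\ldots,A_n)$ as an integral over $\R^n$ (the paper via the kinematic formula of Theorem~\ref{th:KF}, you via Kac--Rice; these are equivalent here), decompose along the normal fan of $P$, shift each $A_i$ by the relevant vertex~$v_i$, and reduce to $\prod_i(t_i-1)$ per-cone terms. The genuine difference is in the per-term bound. In your language, the paper's affine-chart estimate (Lemma~\ref{le:select}, via $\|D\pi_i\|\le 1$) amounts to the crude bound $\sigma_i(u)\ge 1$, i.e.\ replacing $\sqrt{p_i(u)(a^{(i)})}=e^{\langle a^{(i)},u\rangle}/\sigma_i(u)$ by $e^{\langle a^{(i)},u\rangle}$; the resulting integral $|\det(\vec a)|\int_{N_v}e^{\langle\sum_i a^{(i)},u\rangle}\,du$ is then bounded by~$1$ via the characteristic-function inequality of Proposition~\ref{pro:crucial-trick}. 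You instead keep the $\sigma_i$ and use the two-term lower bound $\sigma_i^2\ge 1+e^{2\langle a^{(i)},u\rangle}$, which factorises the cone integral into one-dimensional pieces each equal to $\ln(1+\sqrt 2)<1$. Your route is more elementary---it bypasses both the projective integral geometry and the cone-characteristic-function machinery---and the Loewner-monotonicity step $C_i\preceq M_i$ (the device that cuts the number of rank-one generators from $t_i$ to exactly $t_i-1$) has no direct analogue in the paper; what the paper's approach buys in return is a clean link to kinematic formulas and an inequality (Proposition~\ref{pro:crucial-trick}) of independent interest.
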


The bound in this theorem looks similar to the one in a conjecture attributed to Kushnirenko,
which states that the number of positive nondegenerate zeros 
is always bounded by $(t_1-1)\cdots (t_n-1)$. However, this was disproved 
in~\cite{haas:02}, already in the special case $n=2$.\footnote{This conjecture was never published by Kushnirenko and apparently, he did not believe in it.}

In the unmixed situation, where all supports equal~$A$, it is well known~\cite{edel_kost} 
that the expected number of positive zeros can be expressed by the 
volume of the image of the Veronese like map 
$\R^n_{>0} \to \proj(\R^A)$ sending $x$ to $[x^a]_{a\in A}$. 
This is a consequence of the kinematic formula for real projective spaces. 
In the mixed situation, there is no such simple characterization:  
we work with the more complicated kinematic formula for products of 
projective spaces (Theorem~\ref{th:KF}) that we derive from~\cite{howard:93,BL:19}.  
After passing to exponential coordinates $w=\log x$, we bound 
the resulting integral over~$\R^n$ with a strategy 
inspired by the theory of toric varieties.
The normal fan of the polytope $P$ affords 
a decomposition of $\R^n$ into the normal cones $C$ at the vertices of~$P$.
The resulting integral over $C$ can be bounded in terms of the characteristic function of 
the dual cone of~$C$. 
Finally, an explicit a priori bound on this characteristic function (Proposition~\ref{pro:crucial-trick}) 
completes the argument.

\subsection{The univariate case and a conjecture}

The univariate case ($n=1$) was settled,
up to multiplicative constants,
by Jindal et al.~\cite{jindal_et_al:20}. 
They showed that for any subset 
$S\subseteq\R$ of cardinality~$t$, we have 
\begin{equation}\label{eq:jindal}
 \mbox{$\E(S) \le  \frac{2}{\pi} \sqrt{t-1}$} .
\end{equation}
Moreover,  they constructed a sequence $S_t\subseteq\Z$ of supports of cardinality~$t$ 
with $\E(S_t) \ge c\sqrt{t}$ for some constant $c>0$. 
Consider for $t_1,\ldots,t_n\ge 1$ the supports 
$A_{1}:= S_{t_1} \times 0\ldots\times 0,\ldots, 
A_{n}:= 0\times \ldots \times 0 \times S_{t_n} $. 
These supports describe a system of $n$ equations, where the $i$th equation depends on $x_i$ only. 
Therefore, 
$\E(A_{1},\ldots,A_{n}) = \E(S_{t_1})\cdots \E(S_{t_n})$, 
which with the above leads to the lower bound 
\begin{equation}\label{eq:LB}
  \E(A_{1},\ldots,A_{n}) \ \ge\ c^n \sqrt{t_1\cdots t_n} .
\end{equation}
We complement this by showing that for any $A= S_1\times\ldots\times S_n$ in product form, 
the expectation $\E(A,\ldots,A)$ can be expressed in terms of the $\E(S_i)$ as follows. 

\begin{prop}\label{pro:MVR}
If $A= S_1\times\ldots\times S_n$ for finite $S_i\subseteq\R$, then 
$$
 \E(A,\ldots,A) = \pi^n (\vol(\proj^n))^{-1} \, \E(S_1)\cdots \E(S_n) .
$$
\end{prop}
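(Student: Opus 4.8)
The plan is to reduce the statement, via the Segre embedding, to the univariate ($n=1$) instance of the Edelman--Kostlan formula recalled in the introduction. Write $A=S_1\times\cdots\times S_n$ and $t:=|A|=t_1\cdots t_n$, let $\varphi_A\colon\R^n_{>0}\to\proj(\R^A)$, $x\mapsto[x^a]_{a\in A}$, be the Veronese-like map, and put $M_A:=\varphi_A(\R^n_{>0})$. By \cite{edel_kost} (the consequence of the kinematic formula for real projective spaces mentioned in the introduction),
$$
\E(A,\ldots,A)\;=\;\frac{\vol_n(M_A)}{\vol(\proj^n)},
$$
the volumes being for the Fubini--Study metric. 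If some $t_i=1$ then $A$ lies in a proper affine subspace of $\R^n$, so $\dim M_A<n$, $\vol_n(M_A)=0$, and $\E(S_i)=0$ as well; both sides then vanish. So I may assume $t_i\ge 2$ for all $i$, in which case $A$ and each $S_i$ affinely span the respective ambient spaces.

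Next I would record the factorization through the Segre map. Identify $\R^A$ with the tensor product $\R^{S_1}\ot\cdots\ot\R^{S_n}$, whose standard basis is indexed by $S_1\times\cdots\times S_n=A$ and whose standard inner product is the tensor-product one. Then the monomial identity $x^a=x_1^{s_1}\cdots x_n^{s_n}$ for $a=(s_1,\ldots,s_n)$ reads $\varphi_A=\s\circ(\varphi_{S_1}\times\cdots\times\varphi_{S_n})$, where $\varphi_{S_i}\colon\R_{>0}\to\proj(\R^{S_i})$ is the univariate Veronese-like map and $\s$ is the Segre embedding $([v_1],\ldots,[v_n])\mapsto[v_1\ot\cdots\ot v_n]$. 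Hence $M_A=\s(M_{S_1}\times\cdots\times M_{S_n})$ with each $M_{S_i}:=\varphi_{S_i}(\R_{>0})$ a smooth arc in $\proj(\R^{S_i})$. The geometric heart of the argument, which I would prove by a direct tangent-space computation, is that $\s$ is an \emph{isometric immersion} from the Riemannian product of the Fubini--Study spaces to the Fubini--Study space of the tensor product: for unit vectors $v_i$ and tangent vectors $\dot v_i\perp v_i$, the differential $d\s$ sends $(\dot v_1,\ldots,\dot v_n)$ to $\sum_i v_1\ot\cdots\ot\dot v_i\ot\cdots\ot v_n$, whose summands are pairwise orthogonal (using $\langle\dot v_i,v_i\rangle=0$) and satisfy $\|v_1\ot\cdots\ot\dot v_i\ot\cdots\ot v_n\|=\|\dot v_i\|$, so that $\|d\s(\dot v_1,\ldots,\dot v_n)\|^2=\sum_i\|\dot v_i\|^2$. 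Since the Segre embedding is injective, it restricts to an isometry $M_{S_1}\times\cdots\times M_{S_n}\to M_A$, whence $\vol_n(M_A)=\vol_n(M_{S_1}\times\cdots\times M_{S_n})=\prod_{i=1}^n\vol_1(M_{S_i})$ by Fubini for Riemannian products.

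Finally I would assemble the identity. Applying the displayed formula with $n=1$ to each $S_i$, together with $\vol(\proj^1)=\pi$, gives $\vol_1(M_{S_i})=\pi\,\E(S_i)$ (in particular each factor is finite, e.g.\ by \eqref{eq:jindal}); substituting,
$$
\E(A,\ldots,A)\;=\;\frac{\vol_n(M_A)}{\vol(\proj^n)}\;=\;\frac{\prod_{i=1}^n\vol_1(M_{S_i})}{\vol(\proj^n)}\;=\;\frac{\pi^n\,\E(S_1)\cdots\E(S_n)}{\vol(\proj^n)},
$$
which is the claim. The step I expect to require the most care is the justification that the Edelman--Kostlan relation holds as an \emph{equality} rather than merely as the upper bound used for Theorem~\ref{th:main-mixed-real}: one needs that $\varphi_A$ (and each $\varphi_{S_i}$) is generically one-to-one, so that the image carries no multiplicity. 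This is where the reduction to affinely spanning $A$ is used — from $\varphi_A(x)=\varphi_A(y)$ one gets $\langle\log x-\log y,\,a-a'\rangle=0$ for all $a,a'\in A$, and affine spanning forces $x=y$.
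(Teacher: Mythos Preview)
Your proof is correct and follows essentially the same route as the paper: both use the kinematic formula $\E(A,\ldots,A)=\vol(Z)/\vol(\proj^n)$, factor the Veronese-like map through the Segre embedding, invoke the fact that the Segre embedding is isometric (which the paper proves in an appendix by the same tangent-space computation you sketch), and conclude via $\vol(Z)=\prod_i\vol(Z_i)$ and $\vol(\proj^1)=\pi$. Your treatment is in fact slightly more detailed than the paper's, handling the degenerate case $t_i=1$ and the injectivity of $\varphi_A$ explicitly.
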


We conjecture that the lower bound~\eqref{eq:LB} is optimal in the following sense. 

\begin{conj}\label{conj:N}
Let $A_i\subseteq\R^n$ be  finite nonempty sets of cardinality~$t_i$ with convex hull~$P_i$,
for $i=1,\ldots,n$. 
We denote by $V_0$ the number of vertices of $P_1+\ldots + P_n$.
Then 
$$
 \E(A_1,\ldots,A_n) \ \le \kappa(n,V_0) \sqrt{t_1 \cdots t_n} 
$$
for some function $\kappa:\N^2\to\N$. In particular,
for $A\subseteq\R^n$ of cardinality~$t$, we have 
$\E(A,\ldots,A) \ \le \kappa(n,V_0) \, t^{\frac{n}{2}}$. 
\end{conj}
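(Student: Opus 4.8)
The plan is to start from an exact integral-geometric representation of $\E(A_1,\ldots,A_n)$ and then, on that integral, reproduce the square-root saving that \eqref{eq:jindal} achieves in the univariate case, direction by direction. Using the kinematic formula (Theorem~\ref{th:KF}) together with a Kac--Rice computation in exponential coordinates $w=\log x$, the expectation takes the form
$$
 \E(A_1,\ldots,A_n) \;=\; (2\pi)^{-\frac n2}\int_{\R^n}\E\,\bigl|\det G(w)\bigr|\,dw ,
$$
where $G(w)$ is the random $n\times n$ matrix whose rows are independent centred Gaussians, the $i$-th row having covariance $\Lambda_i(w):=\mathrm{Cov}_{\mu_i(w)}(a)$, the covariance matrix of the exponent vectors $a\in A_i$ under the tilted probability measure $\mu_i(w)(a)\propto e^{2\langle a,w\rangle}$. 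By Cauchy--Schwarz, $\E|\det G|\le\sqrt{\E(\det G)^2}$, and since the rows are independent one computes $\E(\det G)^2=n!\,\mathrm{D}(\Lambda_1(w),\ldots,\Lambda_n(w))$, where $\mathrm{D}$ denotes the mixed discriminant; the ratio between $\E|\det G|$ and its $L^2$ norm is a constant depending only on $n$, which can be absorbed into $\kappa$. It therefore suffices to bound
$$
 I:=\int_{\R^n}\sqrt{\mathrm{D}\bigl(\Lambda_1(w),\ldots,\Lambda_n(w)\bigr)}\,dw
 \;\le\;\kappa'(n,V_0)\,\sqrt{t_1\cdots t_n}.
$$

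Next I would localise $I$ over the normal fan of $P=P_1+\cdots+P_n$, exactly as in the proof of Theorem~\ref{th:main-mixed-real}: writing $\R^n$ as the union of the $V_0$ normal cones $C_v$ at the vertices $v=v_1+\cdots+v_n$ of $P$, it is enough to bound each $\int_{C_v}\sqrt{\mathrm D}\,dw$ by $\kappa''(n)\sqrt{t_1\cdots t_n}$, the number $V_0$ of cones being paid for by $\kappa$. The decomposition is natural here because, as $w\to\infty$ inside $C_v$, every tilted measure $\mu_i(w)$ concentrates on the vertex $v_i$, so $\Lambda_i(w)\to0$ and the integrand decays, guaranteeing convergence on each cone. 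The model case is the product support $A=S_1\times\cdots\times S_n$: there each $\mu_i(w)$ factorises, the matrices $\Lambda_i(w)$ live on disjoint coordinate blocks, the mixed discriminant collapses to a product of one-dimensional variances, and $I$ splits into $n$ univariate integrals each governed by \eqref{eq:jindal}. This recovers Proposition~\ref{pro:MVR} and proves the conjecture for product supports with $\kappa$ independent of $V_0$; together with \eqref{eq:LB} it confirms that the exponent $\tfrac12$ in $\sqrt{t_1\cdots t_n}$ is correct and identifies the factorisation of the tilted measures as the mechanism to be exploited.

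The hard part is the genuinely mixed case, where the $\mu_i(w)$ do not factorise and the mixed discriminant couples all $n$ exponent directions. The analytic input behind \eqref{eq:jindal} is that the mean-exponent map $w\mapsto\nabla_w\log\rho_i^2(w)=2\,\E_{\mu_i(w)}[a]$ is the gradient of a convex log-partition function, hence a monotone diffeomorphism onto $\inte(P_i)$, with $\sqrt{\det\Lambda_i}$ as its Jacobian; the univariate saving comes from combining this monotone reparametrisation with a Cauchy--Schwarz estimate that replaces the \emph{length} $t_i-1$ by its square root. I would try to run this simultaneously in all $n$ directions by choosing, on each cone $C_v$, coordinates adapted to the vertex and dominating $\sqrt{\mathrm{D}(\Lambda_1,\ldots,\Lambda_n)}$ by a product of $n$ marginal variances through a Loomis--Whitney/Cauchy--Schwarz inequality, then reducing each marginal to a univariate Jindal estimate. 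The principal obstacle is that this domination must not undo the averaging: the crude bound $\mathrm D(\Lambda_1,\ldots,\Lambda_n)\le\prod_i\|\Lambda_i\|$ reproduces the factors $t_i-1$ of Theorem~\ref{th:main-mixed-real} rather than $\sqrt{t_i}$, so what is needed is a multivariate sharpening of Jindal et al.\ asserting that the \emph{integrated} mixed discriminant still carries only $\prod_i\sqrt{t_i}$. A further subtlety, and the reason a purely metric argument cannot succeed, is that $\int\sqrt{\det\Lambda_i}\,dw=\vol(\text{image of the mean map})\le\vol(P_i)$ depends on the metric size of $P_i$, whereas the target bound is invariant under the scaling $A_i\mapsto m_iA_i$; the estimate must therefore exploit the discrete cardinality $t_i$ of $A_i$ and not merely the geometry of $P_i$, precisely as in the one-variable case. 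Establishing this cardinality-sensitive, coupling-robust square-root bound on each cone, uniformly with constants depending only on $n$, is the open core of the conjecture.
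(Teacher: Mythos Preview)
The statement you are addressing is a \emph{conjecture} in the paper, not a theorem: the paper does not prove it and offers no proof to compare against. What the paper does provide is the motivation (the lower bound~\eqref{eq:LB} and Proposition~\ref{pro:MVR} showing the product case behaves as predicted) and the weaker bound of Theorem~\ref{th:main-mixed-real} with $(t_i-1)$ in place of $\sqrt{t_i}$.

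Your proposal is not a proof but a proof \emph{strategy}, and you say so yourself in the final sentence. The strategy is reasonable and well-informed: the integral representation, the normal-fan decomposition, and the reduction to per-cone estimates mirror exactly what the paper does for Theorem~\ref{th:main-mixed-real}; the identification of the Jindal mechanism (monotone mean-exponent map plus Cauchy--Schwarz) as the source of the square-root saving is correct; and your diagnosis of the obstruction is accurate --- the crude bound on the mixed discriminant reproduces $\prod_i(t_i-1)$, and any purely metric estimate via $\vol(P_i)$ violates the scaling invariance $A_i\mapsto m_iA_i$. These are precisely the reasons the conjecture is open. One technical point to watch: the passage from $\E|\det G|$ to $\sqrt{\E(\det G)^2}$ goes in the wrong direction if you want an \emph{upper} bound after taking square roots --- Cauchy--Schwarz gives $\E|\det G|\le(\E(\det G)^2)^{1/2}$, which is fine, but then you claim the ratio is a constant depending only on $n$, which is not true for arbitrary covariances $\Lambda_i(w)$ (it holds when $G$ is a standard Gaussian matrix, not for general row covariances). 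This does not matter for the upper bound you need, but the parenthetical remark about absorbing the ratio into $\kappa$ is unjustified and should be dropped.

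In summary: there is nothing to compare, since the paper contains no proof of this statement; your outline correctly isolates the missing ingredient but does not supply it.
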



In the special case $A= S_1\times\ldots\times S_n$, 
by combining ~\eqref {eq:jindal} with Proposition~\ref{pro:MVR}, 
we obtain 
$\E(A,\ldots,A)\vol(\proj^n) \ \le\  2^n \sqrt{t}$  
with $t=\#A$, 
which is 
smaller than what Conjecture~\ref{conj:N} predicts.

\subsection{Improvement in unmixed case}

We can exponentially improve the dependence on $n$ in the bound of 
Theorem~\ref{th:main-mixed-real}
in the case where all supports are equal.
(Note $\vol(\proj^n)^{-1} = \Gamma(\frac{n+1}{2}) \pi^{-\frac{n+1}{2}}$.) 

\begin{prop}\label{th:main-unmixed-real}
For $A\subseteq\R^n$ of cardinality~$t\ge 1$ with convex hull $P$ and $V_0$ vertices, we have 
$$
 \E(A,\ldots,A) \ \le\ \frac{1}{\vol(\proj^n)} \, V_0\, {t-1 \choose n} .
$$
\end{prop}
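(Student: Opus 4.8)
The plan is to combine the Edelman--Kostlan integral-geometric description of $\E(A,\ldots,A)$ in the unmixed case with an estimate obtained by slicing $\R^n$ along the normal fan of $P$. Put $w=\log x$ and let $\varphi(w)=(e^{\langle a,w\rangle})_{a\in A}\in\R^A$, with projectivization $\psi(w)=[\varphi(w)]\in\proj^{t-1}=\proj(\R^A)$. Then positive zeros of the random system are exactly the $w\in\R^n$ with $\psi(w)$ on the uniformly random projective subspace $\proj\big(\langle c_1,\ldots,c_n\rangle^{\perp}\big)$ of codimension $n$, and the kinematic formula in real projective space (see~\cite{edel_kost}) gives
$$
 \E(A,\ldots,A)\ =\ \frac{1}{\vol(\proj^n)}\int_{\R^n}\rho(w)\,dw ,
$$
where $\rho(w)$ is the volume density of the pullback under $\psi$ of the Fubini--Study metric. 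A short computation shows $\rho(w)=\sqrt{\det\mathrm{Cov}_{\mu_w}(a)}$, the square root of the determinant of the covariance matrix of $a$ distributed on $A$ according to the Gibbs measure $\mu_w(a)=e^{2\langle a,w\rangle}/\|\varphi(w)\|^2$. If $\dim P<n$ then $\mathrm{Cov}_{\mu_w}(a)$ is singular for every $w$, so $\rho\equiv0$ and the asserted bound holds trivially; hence we may assume $\dim P=n$, which forces $t\ge n+1$. It then remains to prove $\int_{\R^n}\rho(w)\,dw\le V_0\binom{t-1}{n}$.

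The first ingredient replaces the barycenter $\bar a_w:=\E_{\mu_w}a$ by a convenient fixed point $c$. For any $c\in\R^n$ the matrix $M_c(w):=\sum_{a\in A}\mu_w(a)(a-c)(a-c)^{\top}$ satisfies $M_c(w)=\mathrm{Cov}_{\mu_w}(a)+(\bar a_w-c)(\bar a_w-c)^{\top}\succeq\mathrm{Cov}_{\mu_w}(a)\succeq0$, hence $\det\mathrm{Cov}_{\mu_w}(a)\le\det M_c(w)$. Cauchy--Binet applied to this Gram matrix gives $\det M_c(w)=\sum_{S\subseteq A,\,|S|=n}\big(\prod_{a\in S}\mu_w(a)\big)\,D_{S,c}^{2}$ with $D_{S,c}:=\big|\det[\,a-c\,]_{a\in S}\big|$, so by subadditivity of the square root $\rho(w)\le\sum_{|S|=n}D_{S,c}\prod_{a\in S}\sqrt{\mu_w(a)}$. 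Now decompose $\R^n$, up to a null set, into the normal cones $C_v=\{w:\langle v-a,w\rangle\ge0\text{ for all }a\in A\}$ at the vertices $v$ of $P$; there are $V_0$ of them. On $C_v$ we apply the pointwise bound with $c=v$: since $v\in A$, all terms with $v\in S$ vanish ($D_{S,v}=0$), leaving only the $\binom{t-1}{n}$ subsets $S\subseteq A\setminus\{v\}$ of size $n$, for which $\prod_{a\in S}\sqrt{\mu_w(a)}=e^{\langle\sigma_S,w\rangle}\,\|\varphi(w)\|^{-n}$ with $\sigma_S:=\sum_{a\in S}a$. Therefore
$$
 \int_{\R^n}\rho(w)\,dw\ \le\ \sum_{v\in\Vt(P)}\ \sum_{\substack{S\subseteq A\setminus\{v\}\\ |S|=n}} D_{S,v}\int_{C_v}\frac{e^{\langle\sigma_S,w\rangle}}{\|\varphi(w)\|^{n}}\,dw ,
$$
so the proof reduces to the single estimate $D_{S,v}\int_{C_v}e^{\langle\sigma_S,w\rangle}\|\varphi(w)\|^{-n}\,dw\le1$ for each vertex $v$ of $P$ and each $n$-subset $S\subseteq A\setminus\{v\}$.

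For this estimate, note that $a\ne v$ for $a\in S$ yields $\|\varphi(w)\|^2\ge e^{2\langle a,w\rangle}+e^{2\langle v,w\rangle}$, so, writing $\|\varphi(w)\|^{n}=\prod_{a\in S}\|\varphi(w)\|$,
$$
 \frac{e^{\langle\sigma_S,w\rangle}}{\|\varphi(w)\|^{n}}\ =\ \prod_{a\in S}\frac{e^{\langle a,w\rangle}}{\|\varphi(w)\|}\ \le\ \prod_{a\in S}\frac{1}{\sqrt{1+e^{2\langle v-a,w\rangle}}} .
$$
If $D_{S,v}=0$ there is nothing to prove; otherwise $\{v-a:a\in S\}$ is a basis of $\R^n$, and since $C_v$ is contained in the simplicial cone $\{w:\langle v-a,w\rangle\ge0\text{ for }a\in S\}$ we may enlarge the domain and substitute $y_a=\langle v-a,w\rangle$, whose Jacobian is $D_{S,v}^{-1}$. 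The integral then factors, and with $\int_0^\infty(1+e^{2y})^{-1/2}\,dy=\log(1+\sqrt2)$ (substitute $u=e^{-y}$) we obtain
$$
 D_{S,v}\int_{C_v}\frac{e^{\langle\sigma_S,w\rangle}}{\|\varphi(w)\|^{n}}\,dw\ \le\ \prod_{a\in S}\int_0^\infty\frac{dy}{\sqrt{1+e^{2y}}}\ =\ \bigl(\log(1+\sqrt2)\bigr)^{n}\ <\ 1 .
$$
Summing over $v$ and $S$ gives $\int_{\R^n}\rho(w)\,dw\le V_0\binom{t-1}{n}$, in fact with the extra factor $(\log(1+\sqrt2))^{n}<1$.

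The Cauchy--Binet bound and the change of variables are routine; the substance of the argument is the choice made in between -- centering the covariance matrix at a \emph{vertex} of $P$ and cutting $\R^n$ along the normal fan -- which makes $D_{S,v}$ cancel against the Jacobian of the simplicial cone while simultaneously forcing $v\notin S$, and thereby produces the binomial coefficient $\binom{t-1}{n}$ rather than $(t-1)^n$. \textbf{The main obstacle} is precisely the uniform cone estimate $D_{S,v}\int_{C_v}e^{\langle\sigma_S,w\rangle}\|\varphi(w)\|^{-n}\,dw\le1$: controlling the Fubini--Study density on the unbounded cones $C_v$ is where one needs an a priori bound on (essentially) the characteristic function of the dual cone, in the spirit of Proposition~\ref{pro:crucial-trick}. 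A secondary point is pinning down the normalization constant in the Edelman--Kostlan formula and disposing cleanly of the non-full-dimensional case.
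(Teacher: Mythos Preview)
Your proof is correct and follows the same architecture as the paper's: the Edelman--Kostlan identity $\E(A,\ldots,A)=\vol(\proj^n)^{-1}\int_{\R^n}\rho(w)\,dw$, the decomposition of $\R^n$ along the normal fan of $P$, centering at the vertex $v$ on its cone so that Cauchy--Binet runs over the $\binom{t-1}{n}$ subsets $S\subseteq A\setminus\{v\}$, and then showing each resulting cone integral is $\le 1$.

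The genuine difference is in how the last step is carried out. The paper passes to the affine chart centered at $v$ and uses $\|D\pi\|\le1$ to bound $J\psi\le J\varphi$; in your language this amounts to the crude one-term lower bound $\|\varphi(w)\|\ge e^{\langle v,w\rangle}$, which reduces the integrand to $D_{S,v}\,e^{-\langle\sum_{a\in S}(a-v),\,w\rangle}$ and then requires the characteristic-function estimate of Proposition~\ref{pro:crucial-trick} to conclude. You instead retain the full Gibbs factor $\|\varphi(w)\|^{-n}$, use the two-term lower bound $\|\varphi(w)\|^2\ge e^{2\langle a,w\rangle}+e^{2\langle v,w\rangle}$ for each $a\in S$, enlarge $C_v$ to the simplicial cone generated by $\{v-a:a\in S\}$, and evaluate the resulting product of one-dimensional integrals directly. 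This is more elementary---it avoids Proposition~\ref{pro:crucial-trick} entirely---and in fact pointwise sharper (your integrand is $\le$ the paper's since $\|\varphi(w)\|\ge e^{\langle v,w\rangle}$), yielding the extra factor $(\log(1+\sqrt2))^n\approx 0.881^n$. So despite your closing remark, you do not actually need the characteristic-function machinery here; what your argument buys over the paper's is a self-contained cone estimate and a slightly better constant, at the cost of an approach that is specific to the unmixed setting.
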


Unfortunately, this bound 
has exponentially worse dependence on~$n$ than the bound 
$\E(A,\ldots,A) \le 2^{1-n} {t \choose n}$ in~\cite{BETC:19}.
For instance, for $t=n+k$ with fixed~$k$, 
$\E(A,\ldots,A)$ goes to $0$ exponentially fast as  $n\to\infty$ by~\cite{BETC:19}, 
so the system has no nondegenerate zero with overwhelming probability.
The bound in Proposition~\ref{th:main-unmixed-real} is too weak to reveal this!


\begin{remark}
The bound in~\cite{BETC:19} also holds for {\em nonstandard} centered Gaussian 
coefficients $c(a) \sim N(0,\s(a)^2)$. 
In this 
situation, our proof of Theorem~\ref{th:main-unmixed-real} 
only leads to an upper bound with the additional factor 
$\big(\max_a \s(a)/\min_a \s(a)\big)^n$
(similarly for Theorem~\ref{th:main-mixed-real}).  
\end{remark}



\subsection{Location of zeros}

We finish with a result on the typical location of the zeros. 
It is well known that for certain random real polynomials, 
the positive reals zeros $x$ tend to accumulate around~$1$:
see~\cite{edel_kost} for the dense 
and~\cite{jindal_et_al:20} for the sparse case. 
This means that $w=\log x$ accumulates around~$0$.
We generalize this to multivariate systems as follows.

\begin{thm}\label{th:real_concentrate_roots}
Fix a finite supports $A_1\ldots,A_n\subseteq\R^n$ and consider a random 
system~\eqref{eq:def_F_Syst} with independent standard Gaussian coefficients~$c_i(a)$
for the {\em stretched supports} $mA_i$, where $m\in \Z_{>0}$. Fix $\e>0$.
Then the probability that the system has a zero $w\in\R^n$ with $\|w\|> \e$
goes to zero, as $m\to\infty$.
\end{thm}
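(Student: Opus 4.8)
First I would reduce the statement to the finiteness of $\E(A_1,\ldots,A_n)$ for the original (unstretched) supports, which is guaranteed by Theorem~\ref{th:main-mixed-real}, via the elementary remark that stretching all supports by the integer $m$ amounts to rescaling the root coordinates by~$m$. Writing~\eqref{eq:def_F_Syst} in exponential coordinates $x=e^w$, its $i$-th equation reads $\sum_{a\in A_i}c_i(a)\,e^{\langle a,w\rangle}=0$; denote this system by $F$ and the analogous system for the supports $mA_i$ by $F_m$. Since $\sum_{a\in A_i}c_i(a)\,e^{\langle ma,w\rangle}=\sum_{a\in A_i}c_i(a)\,e^{\langle a,mw\rangle}$, for every fixed realization of the coefficients $c_i(a)$ the linear diffeomorphism $w\mapsto mw$ of $\R^n$ carries the zero set of $F_m$ bijectively onto the zero set of $F$, preserving nondegeneracy of zeros (the Jacobian merely picks up the factor $m^n$). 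Consequently, writing $N_{>\rho}(G)$ for the number of nondegenerate real zeros $w$ of a system $G$ with $\|w\|>\rho$, we obtain the pathwise identity $N_{>\e}(F_m)=N_{>m\e}(F)$.

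Next I would use that the proof of Theorem~\ref{th:main-mixed-real} represents $\E(A_1,\ldots,A_n)$ as $\int_{\R^n}\rho_F(w)\,dw$ for an explicit nonnegative density $\rho_F$ obtained from the coarea formula together with the kinematic formula (Theorem~\ref{th:KF}). This representation is local, i.e., it refines to
$$
 \E\,N_{>\rho}(F)\;=\;\int_{\|w\|>\rho}\rho_F(w)\,dw\qquad\text{for every }\rho\ge 0 .
$$
Taking $\rho=0$ and invoking Theorem~\ref{th:main-mixed-real} shows $\rho_F\in L^1(\R^n)$, so its tails vanish; combining this with the pathwise identity and Markov's inequality yields
$$
 \Prob\big(N_{>\e}(F_m)\ge 1\big)\;\le\;\E\,N_{>\e}(F_m)\;=\;\E\,N_{>m\e}(F)\;=\;\int_{\|w\|>m\e}\rho_F(w)\,dw\;\longrightarrow\;0
$$
as $m\to\infty$. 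Finally, for standard Gaussian coefficients the random system $F_m$ almost surely has only nondegenerate zeros (a standard Bertini--Sard argument: the coefficient vectors admitting a degenerate zero form the image, under projection to coefficient space, of the incidence set $\{(c,w):F_m(w)=0,\ \det J_wF_m(w)=0\}$, which has dimension one less than the coefficient space, hence a Lebesgue-null set), so the event that $F_m$ has \emph{some} zero with $\|w\|>\e$ agrees, up to a null event, with $\{N_{>\e}(F_m)\ge 1\}$, and the claim follows.

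The rescaling step and the Markov estimate are routine. The point that needs care — and which I expect to be the main obstacle — is the \emph{locality} of the integral-geometric computation underlying Theorem~\ref{th:main-mixed-real}: one must check that the expected number of zeros lying in a prescribed measurable region equals the integral of the \emph{same} density $\rho_F$ over that region, and not merely that its total integral is finite. This is immediate once the zero count is written through the coarea (Kac--Rice) formula as in the proof of Theorem~\ref{th:main-mixed-real}, but it should be recorded explicitly; the almost-sure nondegeneracy of the zeros is the only other, and standard, ingredient.
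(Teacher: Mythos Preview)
Your proof is correct and rests on the same core mechanism as the paper's --- the substitution $w\mapsto mw$ together with the vanishing of the tails of an integrable density --- but your packaging is cleaner. You perform the rescaling upfront at the level of zero counts, obtaining the pathwise identity $N_{>\e}(F_m)=N_{>m\e}(F)$, and then invoke Theorem~\ref{th:main-mixed-real} only as a black box (finiteness of $\E(A_1,\ldots,A_n)$, hence $\rho_F\in L^1(\R^n)$, hence its tails vanish). The paper instead re-traces the cone decomposition and the row-selection estimates from the proof of Theorem~\ref{th:main-mixed-real} for the stretched supports, and then applies a dedicated lemma (Lemma~\ref{le:lim-eI}) to each resulting integral; the proof of that lemma is exactly the substitution $u=mw$ followed by ``tails of an integrable function vanish,'' i.e., your argument carried out piecewise. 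Both routes need the locality of the integral representation, which you correctly flag as the only nontrivial point. Your version has the additional virtue of making the Markov step and the almost-sure nondegeneracy of the zeros explicit, which the paper leaves implicit.
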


There are sophisticated results on the distributions of 
{\em complex zeros} of random fewnomials systems~\cite{shiff-zell:04,shiff-zell:11}.




\section{Preliminaries}


\subsection{Metric properties of charts of projective space}\label{se:Pproj}

Consider the real projective space $\proj^m$.  
We shall identify the tangent space $T_{[y]} \proj^m$ 
at a point $[y]:=[y_0:\ldots : y_m]$ 
with $\R y^\perp$.  
The standard Riemannian metric on $\proj^m$ is defined by 
$\langle v,w \rangle_{[y]} := \|y\|^{-2} \langle v,w \rangle$ 
for $v,w \in \R y^\perp$. 
We denote by $P_y$ the orthogonal projection onto  $\R y^\perp$. 

Consider the affine chart 
$(\proj^m)_{y_0\ne 0}\to \R^m$, which maps 
$[y_0:\ldots : y_m]$ to $y_0^{-1} (y_1,\ldots,y_m)$. 
Its inverse is given by 
$$
 \pi\colon \R^m \to (\proj^m)_{y_0\ne 0},\, (y_1,\ldots,y_m) \mapsto [1:y_1:\ldots : y_m] .
$$
By \cite[Lemma 14.8]{Condition}, the derivative of $\pi$ at 
$y':=(y_1,\ldots,y_m)$ satisfies
$D_{y'}\pi = \|\pi(y')\|^{-1} P_y$, 
and therefore,
\begin{equation}\label{eq:der-pi}
 \|D_{y'}\pi \| \le \|\pi(y')\|^{-1} \le 1 .
\end{equation}


\subsection{On the quantity $\sigma$}\label{se:ASF}

The relative position of two subspaces of a Euclidean vector space~$E$ can be quantified by a volume-like quantity,
which is crucial in the study of integral geometry in homogeneous spaces; 
see \cite{howard:93} and \cite[\S3.3]{BL:19}. 
To define this quantity, note first that there is an induced inner product on the exterior algebra $\Lambda(E)$ 
given by \cite[(2.1)]{BL:19} 
$$
 \langle v_1 \wedge\cdots\wedge v_k , w_1 \wedge\cdots\wedge w_k\rangle = \det (\langle v_i, w_j \rangle)_{1\le i,j\le k} .
$$
More concretely, 
$\|v_1\wedge\ldots\wedge v_n\| = |\det[v_1,\ldots,v_n]|$, 
where $[v_1,\ldots,v_n]$ denotes the matrix 
with columns~$v_i\in E=\R^n$

Let $V,W$ be linear subspaces of $E$ of complementary dimensions. 
We define \cite[(3.3)]{BL:19}
\begin{equation} \label{eq:def-sigma}
 \s(V,W) :=\|v_1\wedge\ldots \wedge v_k\wedge w_1\wedge\ldots\wedge w_m \| \in [0,1] ,
\end{equation}
where $v_1,\ldots,v_k$ and $w_1,\ldots,w_m$ are orthonormal bases 
of $V$ and $W$, respectively. 
Clearly, $\s(V,W)=\s(W,V)$. 
Here are the extreme cases: 
$\s(V,W)=0$ iff $V\cap W\ne 0$ and 
$\s(V,W)=1$ iff $v$ and $W$ are orthogonal.  
We refer to Appendix~\ref{se:A0} for the proof of the following easy observation. 

\begin{prop}\label{prop:sigma-symm}
We have $\sigma(V^\perp,W^\perp) = |\det p\,|$, if the map 
$p\colon V^\perp \to W$ denotes the restriction of the 
orthogonal projection $E\to  W$ to $V^\perp$. Moreover,
$\sigma(V,W) = \sigma(V^\perp,W^\perp)$.
\end{prop}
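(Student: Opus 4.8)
~\label{proofplan}

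The plan is to prove both claims by direct computation with exterior algebra, exploiting the defining formula~\eqref{eq:def-sigma} and the elementary fact that $\|v_1\wedge\ldots\wedge v_n\|=|\det[v_1,\ldots,v_n]|$ for vectors in $E=\R^n$. Set $k=\dim V$ and $m=\dim W$, so that $k+m=n$; then $\dim V^\perp=m$ and $\dim W^\perp=k$, and the pair $(V^\perp,W^\perp)$ again has complementary dimensions, so $\sigma(V^\perp,W^\perp)$ is well defined.

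First I would prove the determinant formula. Pick an orthonormal basis $u_1,\ldots,u_m$ of $V^\perp$ and an orthonormal basis $z_1,\ldots,z_k$ of $W^\perp$. By~\eqref{eq:def-sigma}, $\sigma(V^\perp,W^\perp)=|\det[u_1,\ldots,u_m,z_1,\ldots,z_k]|$. The key observation is that the absolute value of this determinant is unchanged if we replace the $z_j$ by any other orthonormal basis of $W^\perp$, since such a change amounts to right-multiplication by an orthogonal matrix on the last $k$ columns; more usefully, it is also unchanged under adding to any column a vector lying in the span of the others. So I would replace each $u_i$ by $p(u_i)$, its orthogonal projection onto $W$: since $u_i-p(u_i)\in W^\perp=\mathrm{span}(z_1,\ldots,z_k)$, this column operation does not change $|\det|$. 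Now the matrix $[p(u_1),\ldots,p(u_m),z_1,\ldots,z_k]$ has its first $m$ columns in $W$ and its last $k$ columns forming an orthonormal basis of $W^\perp$, so the determinant factors (after choosing an orthonormal basis of $W$ to express the whole thing in block-triangular form) as $\pm\det(P)$, where $P$ is the matrix of $p\colon V^\perp\to W$ in the chosen orthonormal bases. Hence $\sigma(V^\perp,W^\perp)=|\det p|$, which is the first assertion.

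Next I would prove the symmetry $\sigma(V,W)=\sigma(V^\perp,W^\perp)$. Applying the formula just established with the roles of the pair swapped, $\sigma(V,W)=|\det q|$ where $q\colon (V^\perp)^\perp\to W^\perp$, i.e.\ $q\colon V\to W^\perp$, is the restriction of the orthogonal projection $E\to W^\perp$; note $(V^\perp)^\perp=V$ and $(W^\perp)^\perp=W^\perp$ has complementary dimension to $V$, so the hypotheses are met. Thus it suffices to show $|\det p|=|\det q|$ for the two projections $p\colon V^\perp\to W$ and $q\colon V\to W^\perp$. This is a clean linear-algebra fact: writing the orthogonal projection of $E$ onto $W$ restricted to the decomposition $E=V\oplus V^\perp$ (which need not be orthogonal, but is a direct sum since $V\cap V^\perp=0$ trivially and dimensions add), one computes a block matrix for the identity-like map $E\cong V\oplus V^\perp \to W\oplus W^\perp$ whose diagonal blocks are essentially $q$ and $p$; the off-diagonal contributions can be removed, or one simply invokes that for the orthogonal projection $\Pi_W$ one has, via the singular value / principal angle decomposition of the pair $(V,W^\perp)$, matching singular values on both sides. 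Concretely, I would diagonalize using principal angles $\theta_1,\ldots$ between $V$ and $W$: in suitable orthonormal bases $p$ has singular values $\cos\theta_i$ (together with some $1$'s and $0$'s accounting for the dimension discrepancy), and the same principal angles govern $q$, giving $|\det p|=\prod\cos\theta_i=|\det q|$. Alternatively, and perhaps most cleanly, I would note that $|\det p|^2 = \det(p^*p)$ and identify $p^*p$ with the compression of $\Pi_W$ to $V^\perp$, then use that the nonzero eigenvalues of $\Pi_W|_{V^\perp}$ and $\Pi_{W^\perp}|_{V}$ coincide (both are $1$ minus the squared cosines of the principal angles between $V^\perp$ and $W$, equivalently between $V$ and $W^\perp$, by complementation).

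The main obstacle is the bookkeeping in the second step: one must carefully track the ``extra'' singular values (the $0$'s and $1$'s) coming from the fact that $V$ and $W^\perp$ generally do not have equal dimension, and make sure they contribute trivially to $|\det|$ on both sides. Everything else is routine: the column-operation invariance of $|\det|$ in the first step, and the block-triangular factorization, are standard. I expect the cleanest write-up routes the whole proof through the identity $\sigma(V,W)^2=\det\big(\Pi_W|_{V^\perp}\circ(\Pi_W|_{V^\perp})^*\big)$ once and then cites the principal-angle symmetry $\angle(V^\perp,W)=\angle(V,W^\perp)$, deferring the verbose parts to Appendix~\ref{se:A0} as the excerpt indicates.
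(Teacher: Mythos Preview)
Your first step is essentially the paper's proof: both decompose each $u_i\in V^\perp$ as $p(u_i)+(\text{piece in }W^\perp)$, kill the second piece in the wedge against the orthonormal basis of $W^\perp$, and read off $|\det p|$ from the resulting orthogonal block structure. (A minor typo: your parenthetical ``$(W^\perp)^\perp=W^\perp$'' should read $=W$; your conclusion is unaffected.)

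For the symmetry $\sigma(V,W)=\sigma(V^\perp,W^\perp)$ the routes diverge. The paper reduces, as you do, to the identity $|\det p|=|\det q|$ for the two projections, but then simply cites \cite[Lemma~5.4]{rigid-II} rather than proving it. You instead sketch a self-contained argument via principal angles (or, equivalently, via the spectral identity between $\Pi_W|_{V^\perp}$ and $\Pi_{W^\perp}|_{V}$). Your reduction is correct, and the principal-angle argument can be made to work; note, however, that both $p\colon V^\perp\to W$ and $q\colon V\to W^\perp$ are maps between spaces of \emph{equal} dimension ($m$ and $k$ respectively), so the ``extra $0$'s and $1$'s'' you worry about do not actually arise here. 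A cleaner packaging, avoiding principal angles entirely: write the orthogonal change-of-basis matrix $A=(\langle v_i,w_j\rangle)$ in $2\times 2$ block form; then $|\det p|$ and $|\det q|$ are the absolute values of the two complementary square blocks, and these coincide for any orthogonal matrix by Jacobi's complementary-minor identity (equivalently, $A^{-1}=A^T$ plus the cofactor formula). This is presumably what the cited lemma records. Your approach buys self-containment; the paper's buys brevity.
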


Clearly, the definition \eqref{eq:def-sigma} can be extended to more than two subspaces;  
see \cite[(3.5)]{BL:19}. But if $W=W_1\oplus\ldots\oplus W_n$ is an orthogonal 
decomposition, we can reduce to the case of two subspace~\cite[Lemma A.6]{BL:19}. 
\begin{equation}\label{eq:reduce-sigma}
 \sigma(V,W_1,\ldots,W_n) = \sigma(V,W_1+\ldots+W_n) .
\end{equation}

\subsection{Characteristic functions of convex cones}\label{se:charf}

We prove here an priori upper bound on the characteristic function of a convex cone, 
which is a key ingredient in the proof of Theorem~\ref{th:main-mixed-real}. 

A convex cone $C\subseteq\R^n$ is called {\em proper} if it is $n$-dimensional 
and pointed, i.e., full-dimensional and contained in a half\-space. 
It is well known that a convex $C\subseteq \R^n$ is proper iff its {\em dual cone}
$$
 C^* := \{ x\in \R^n \mid \forall y\in C\ \langle x,y \rangle \ge 0 \}
$$
is proper. 
Let $g\in\GL(n,\R)$. Then $K:=g(C)$ is a proper cone and 
$g^T(K^*) = C^*$. 
We denote by $\inte(C)$ the interior of $C$. 

We assign to a proper cone $C\subseteq\R^n$ the function
\begin{equation}\label{eq:char-function}
 v_C\colon \inte(C^*) \to\R_{>0},\ 
 v_C(x) := \int_{C} e^{-\langle x,y\rangle}\, dy .  
\end{equation}
One calls $v_C$ the  {\em characteristic function} 
(or Koszul-Vinberg characteristic) of $C^*$. 
It is a useful analytic tool for investigating convex cones, e.g., 
see \cite[I.3]{faraut_koranyi:94} and \cite{guler:96}. 
E.g., $\R^n_{>0}$ is self dual and 
$v_{\R^n_{>0}}(x) = (x_1\cdot\ldots\cdot x_n)^{-1}$
for $x\in \R^n_{>0}$.

The homogeneity  property
$v_C(tx) = t^{-n} v_C(x)$ for $t>0$, $x\in \inte(C^*)$ 
is immediate to check. Moreover, the transformation formula 
implies the following invariance property: 
if $g\in\GL(n,\R)$ and $K:=g(C)$, then 
$g^T(K^*)= C^*$ and 
\begin{equation}\label{eq:invareq:-KV}
 v_K(z) = |\det g| \, v_C(g^T z) \quad \mbox{ for $z\in\inte(K^*)$} .
\end{equation}

\begin{remark}\label{pro:char-fcts}
The function $\log v_C$ is strictly convex and essentially equals Nesterov and Nemirowski's 
universal self-concordant barrier function~\cite[\S2.5]{nesterov-nemirovski:94}, 
see \cite{guler:96} for the proof.
\end{remark}

The following is well known, e.g., see~\cite[Thm.~4.1]{guler:96}.
Appendix~\ref{se:A1} contains the proof for the sake of completeness. 

\begin{lemma}\label{le:partial_volume}
We have
$v_C(x) = n!\, \vol\big\{ y\in C \mid \langle x,y\rangle \le 1 \big\}$
for $x\in \inte(C^*)$. 
\end{lemma}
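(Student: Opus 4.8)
The plan is to compute $v_C(x)$ by the usual coarea/layer-cake method, peeling off level sets of the linear functional $y\mapsto \langle x,y\rangle$ and integrating. Fix $x\in\inte(C^*)$, so that $\langle x,y\rangle>0$ for all $y\in C\setminus\{0\}$ and the slices $\{y\in C : \langle x,y\rangle = s\}$ are compact. Introduce the sublevel set $K_s := \{y\in C : \langle x,y\rangle \le s\}$ for $s\ge 0$, and note by the homogeneity of the cone that $K_s = s\,K_1$, hence $\vol(K_s) = s^n\,\vol(K_1)$, which is finite precisely because $x\in\inte(C^*)$ guarantees $K_1$ is bounded.

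Next I would write the integral defining $v_C(x)$ as an integral over the value $s=\langle x,y\rangle$. Concretely, using the formula $e^{-s} = \int_s^\infty e^{-r}\,dr$ and Fubini, or equivalently integrating by parts, one gets
\begin{equation*}
 v_C(x) = \int_C e^{-\langle x,y\rangle}\,dy
 = \int_C \Big(\int_{\langle x,y\rangle}^\infty e^{-r}\,dr\Big)\,dy
 = \int_0^\infty e^{-r}\,\vol(K_r)\,dr .
\end{equation*}
Substituting $\vol(K_r) = r^n\,\vol(K_1)$ and using $\int_0^\infty r^n e^{-r}\,dr = n!$ gives $v_C(x) = n!\,\vol(K_1) = n!\,\vol\{y\in C : \langle x,y\rangle \le 1\}$, which is exactly the claim.

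The only genuinely delicate point is justifying the interchange of integrals (Fubini/Tonelli): since the integrand $e^{-r}\mathbf{1}_{\{\langle x,y\rangle \le r\}}$ is nonnegative and measurable on $C\times(0,\infty)$, Tonelli applies directly, and finiteness of the resulting value (equivalently of $\vol(K_1)$) follows from $x\in\inte(C^*)$. A clean way to see boundedness of $K_1$: pick $\delta>0$ with $x-\delta u\in C^*$ for every unit vector $u$, i.e. $\langle x,y\rangle \ge \delta\|y\|$ on $C$; then $K_1\subseteq\{\|y\|\le 1/\delta\}$. So the main obstacle is really just this uniform lower bound on the functional, which is standard for cones with $x$ in the interior of the dual; everything else is the routine layer-cake computation. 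I would close by remarking that the same argument shows $v_C$ is finite and positive on all of $\inte(C^*)$, consistent with \eqref{eq:char-function}.
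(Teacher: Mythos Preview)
Your proof is correct and follows essentially the same approach as the paper: both slice the integral along the linear functional $\langle x,\cdot\rangle$, invoke homogeneity of the cone, and reduce to a gamma integral. The only cosmetic difference is that the paper slices into $(n-1)$-dimensional level sets $\{y\in C:\langle x,y\rangle = t\|x\|\}$ via the coarea formula (picking up a factor $\|x\|$ that cancels later), whereas your layer-cake version with sublevel sets $K_r$ goes directly to the $n$-dimensional volumes and is arguably a touch cleaner.
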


The following 
is essential for the proof of 
Theorem~\ref{th:main-mixed-real}. 

\begin{prop}\label{pro:crucial-trick}
Let $C\subseteq\R^n$ be a proper cone. 
Then we have for $b_1,\ldots,b_n \in C^*$. 
$$
|\det[b_1,\ldots,b_n]| 
\cdot v_C (b_1+\ldots +b_n) \le 1 .
$$
This bound is optimal. 
\end{prop}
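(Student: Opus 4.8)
The plan is to exploit the invariance property \eqref{eq:invareq:-KV} and the homogeneity of $v_C$ to reduce to a single normalized situation, and then apply Lemma~\ref{le:partial_volume} to turn the claim into a volume comparison. First I would observe that if all $b_i$ lie in $\inte(C^*)$ and are linearly independent, then the matrix $g^T := [b_1,\ldots,b_n]$ is invertible and maps the standard basis vectors $e_i$ to the $b_i$; setting $K := g(C)$, equation \eqref{eq:invareq:-KV} gives $v_K(z) = |\det g|\, v_C(g^Tz)$ for $z\in\inte(K^*)$. Applying this at $z = e_1+\ldots+e_n$, whose image under $g^T$ is exactly $b_1+\ldots+b_n$, the quantity $|\det[b_1,\ldots,b_n]|\cdot v_C(b_1+\ldots+b_n)$ equals $v_K(\mathbf 1)$ where $\mathbf 1=(1,\ldots,1)$. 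Crucially, since $b_i \in C^*$ we have $g^T(K^*) = C^*$, hence $K^* \subseteq (\R^n_{\ge 0})$... wait — more precisely, $e_i = (g^T)^{-1} b_i$, and each $b_i\in C^*$ means $e_i \in K^*$, so $K^* \supseteq \cone(e_1,\ldots,e_n) = \R^n_{\ge 0}$, and dually $K \subseteq \R^n_{\ge 0}$.

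Now the point is that $v_K(\mathbf 1) = \int_K e^{-\langle \mathbf 1, y\rangle}\,dy \le \int_{\R^n_{\ge 0}} e^{-(y_1+\ldots+y_n)}\,dy = 1$, using $K\subseteq\R^n_{\ge 0}$ and positivity of the integrand. Alternatively, and perhaps more cleanly, I would invoke Lemma~\ref{le:partial_volume}: $v_K(\mathbf 1) = n!\,\vol\{y\in K : y_1+\ldots+y_n \le 1\} \le n!\,\vol\{y\in\R^n_{\ge 0} : \sum y_i \le 1\} = n!\cdot\tfrac{1}{n!} = 1$, since the standard simplex has volume $1/n!$. This settles the strict-interior, full-rank case with inequality, and in fact shows equality holds precisely when $K$ fills out $\R^n_{\ge 0}$ up to measure zero, i.e. when $C$ is (a linear image of) the nonnegative orthant and the $b_i$ are scalar multiples of the generators of $C^*$ — which gives optimality: take $C = C^* = \R^n_{\ge 0}$ and $b_i = e_i$, so that $|\det[e_1,\ldots,e_n]| \cdot v_{\R^n_{\ge 0}}(\mathbf 1) = 1\cdot 1 = 1$.

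The main obstacle is handling the degenerate boundary cases, and I expect this to be the only real subtlety. If the $b_i$ are linearly dependent, then $\det[b_1,\ldots,b_n] = 0$ and the left side vanishes, so the bound holds trivially (one must only note $v_C$ is finite on $\inte(C^*)$, or that if $b_1+\ldots+b_n \notin \inte(C^*)$ the product should be interpreted as $0$ anyway since the determinant kills it). If the $b_i$ are linearly independent but some lie on $\partial C^*$, I would argue by a continuity/limiting argument: perturb each $b_i$ to $b_i^{(\delta)} \in \inte(C^*)$ with $b_i^{(\delta)} \to b_i$ as $\delta\to 0$ (possible since $\inte(C^*)$ is dense in $C^*$ and nonempty as $C^*$ is proper), apply the interior case, and pass to the limit — using that $b_1+\ldots+b_n \in \inte(C^*)$ whenever the $b_i$ are independent and at least span a set whose sum is interior, together with continuity of $v_C$ on $\inte(C^*)$ (from Remark~\ref{pro:char-fcts}, $\log v_C$ is a smooth convex barrier, hence continuous). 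The mild care needed is that the sum of boundary points can still be interior; if it is not, the determinant argument again makes the statement vacuous. Assembling these cases yields the claim in full generality.
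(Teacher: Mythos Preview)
Your argument is correct and is essentially the paper's proof rephrased through the invariance property~\eqref{eq:invareq:-KV}: where the paper introduces the dual basis $b_i^*$ and shows $C\subseteq\cone(b_1^*,\ldots,b_n^*)$, you apply the linear change of variables $g^T=[b_1,\ldots,b_n]$ to obtain $K=g(C)\subseteq\R^n_{\ge 0}$, which is the same containment after transporting by~$g$. Both then finish by comparing with the standard simplex via Lemma~\ref{le:partial_volume} (your direct integral $\int_{\R^n_{\ge 0}} e^{-\sum y_i}\,dy=1$ is a pleasant shortcut that bypasses the lemma entirely).

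One simplification: your limiting argument for $b_i\in\partial C^*$ is unnecessary. As soon as $b_1,\ldots,b_n\in C^*$ are linearly independent, the simplicial cone $\cone(b_1,\ldots,b_n)$ is $n$-dimensional and contained in $C^*$, so its interior lies in $\inte(C^*)$; in particular $b_1+\ldots+b_n\in\inte(C^*)$ automatically, and your main argument applies verbatim. (Your remark that in the remaining case ``the determinant argument again makes the statement vacuous'' is off: the determinant is nonzero there --- the point is rather that this case simply cannot occur.)
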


\begin{proof}
We denote by $\cone(b_1,\ldots,b_n)\subseteq C^*$ the convex cone generated 
by $b_1,\ldots,b_n$. Without loss of generality, 
we may assume that $b_1,\ldots,b_n \in C^*$ is a basis of $\R^n$.
Let $b^*_1,\ldots,b^*_n$ denote its dual basis, that is 
$\langle b^*_i, b_j\rangle = \delta_{ij}$. In matrix terminology, this means 
$[b^*_1,\ldots,b^*_n]^T [b_1,\ldots,b_n] = I_n$, hence 
\begin{equation}\label{eq:vol-dual}
 \det [b^*_1,\ldots,b^*_n] \det[b_1,\ldots,b_n] = \pm 1 .
\end{equation}
The definition of the dual basis implies that 
$\cone(b^*_1,\ldots,b^*_n)$ is the dual cone of  $\cone(b_1,\ldots,b_n)$. 
Therefore, by duality, we get  
$$
  C \subseteq \cone(b_1,\ldots,b_n)^* = \cone(b^*_1,\ldots,b^*_n) .
$$
Put $d := b_1 + \ldots +b_n$ and 
let $y\in C$ such that $\langle d,y \rangle \le 1$. 
Since 
$C \subseteq \cone(b^*_1,\ldots,b^*_n)$,  
we can write 
$y=\sum_i t_i b^*_i$ 
with $t_i\ge 0$. Moreover
$\sum_i t_i = \langle d ,y\rangle \le 1$.
Thus we have shown the inclusion
$$
 K :=\{ y \in C \mid \langle d ,y \rangle \le 1\} \subseteq \conv\{0,b^*_1,\ldots,b^*_n\} . 
$$
This implies the inequality of volumes
\begin{equation*}
\vol_n K \le\ \vol_n\conv\{0,b^*_1,\ldots,b^*_n\} 
  = \frac{1}{n!} |\det[b^*_1,\ldots,b^*_n]| .
\end{equation*}
Multiplying with $n!\, |\det[b_1,\ldots,b_n]|$, 
using \eqref{eq:vol-dual} 
and taking into account Lemma~\ref{le:partial_volume},
the assertion follows.

The optimality is attained for $C=\R^n_{>0}$ and $b_i=d_i e_i$ with $d_i >0$. 
Indeed, we have 
\begin{equation*}
 |\det [b_1,\ldots,b_n] |\cdot v_C(d) = d_1\cdot\ldots\cdot d_n\ (d_1\cdot\ldots\cdot d_n)^{-1} =  1 . 
 \qedhere 
\end{equation*}
\end{proof}


\subsection{Vertices and normal fan of sums of polytopes}\label{se:NF}

We recall here some basic facts about polytopes and their normal fans; 
see~\cite[\S7.1]{ziegler:95} for more details. 

Let $P\subseteq\R^n$ be a full-dimensional polytope and $v$ be a vertex of~$P$. 
The {\em cone $P_v$ of~$P$ at $v$} is defined as the convex cone generated by $P-v$. 
It is a proper cone. The dual cone of $P_v$, 
also called the {\em inner normal cone} of $P$ at $v$, is defined as 
$$
 P_v^* := \{ y\in\R^n \mid \forall x \in P\ \langle x-v,y\rangle \ge 0 \} .
$$
The cone $P_v^*$ is also  proper.
The union over all $P_v^*$ equals $\R^n$. Moreover, for $v_1\ne v_2$, we have 
$\dim(P_{v_1}^* \cap P_{v_2}^*) < n$. 
In fact, the $P_v^*$ are the $n$-dimensional cones of the normal fan of~$P$.

We will need the following result. 

\begin{lemma}\label{le:V-sum-polytope}
Let $P_1,\ldots,P_n$ be polytopes in $\R^n$. 
There is an injective map 
$$
 \Vt(P_1+\ldots+P_n) \to \Vt(P_1)\times\ldots \Vt(P_n),\, v\mapsto (v_1,\ldots,v_n)
$$
satisfying $v=v_1+\ldots+v_n$. 
Moreover, if we denote by $\Pi_i$ the cone of 
$P_i$ at the vertex $v_i$, then 
$\Pi := \Pi_1+\ldots+\Pi_n$ is the cone of $P_1+\ldots+P_n$ at the vertex $v_1+\ldots+v_n$. 
In particular, 
$\Pi^*= \Pi_1^* \cap\ldots\cap \Pi_n^*$.
\end{lemma}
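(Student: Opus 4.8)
The plan is to prove the three assertions in order, using the standard theory of normal fans. First I would recall the characterization of vertices of a Minkowski sum via normal cones: a point $v\in P:=P_1+\ldots+P_n$ is a vertex if and only if there is a linear functional $y$ that is uniquely maximized (equivalently, whose negative is uniquely minimized) on $P$ at $v$, and this happens if and only if $y$ lies in the interior of a cone of the common refinement of the normal fans. Concretely, $v$ is a vertex of $P$ iff there is $y\in\R^n$ such that each $P_i$ has a unique vertex $v_i$ minimizing $\langle\,\cdot\,,y\rangle$ and $v=v_1+\ldots+v_n$; this uses the elementary fact that $\langle\cdot,y\rangle$ is minimized on a Minkowski sum exactly at the sum of the faces on which it is minimized on each summand.

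To build the injective map, for each vertex $v$ of $P$ pick $y$ in the interior of the inner normal cone $P_v^*$; then $y$ lies in the interior of exactly one full-dimensional cone of the common refinement, hence each $P_i$ has a unique $\langle\cdot,y\rangle$-minimal vertex, call it $v_i$, and necessarily $v=v_1+\ldots+v_n$. I would check that the assignment $v\mapsto(v_1,\ldots,v_n)$ is well-defined (independent of the choice of $y$) because any two interior points of $P_v^*$ lie in the same open cone of the refinement — or more simply, because $v_i$ is characterized intrinsically as the summand vertex with $v\in v_i+(P-v_1-\ldots-v_n)$. Injectivity is then immediate: if $v$ and $v'$ map to the same tuple then $v=\sum v_i=v'$.

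Next, the claim about the tangent cones. Writing $\Pi_i=\cone(P_i-v_i)$ and $\Pi=\cone(P-v)$ with $v=\sum v_i$, the inclusion $\Pi_1+\ldots+\Pi_n\subseteq\Pi$ is clear since $P-v=\sum_i(P_i-v_i)$ and the cone generated by a Minkowski sum contains the sum of the generated cones. For the reverse inclusion I would argue at the level of dual cones: a functional $y$ lies in $(\Pi_1+\ldots+\Pi_n)^*$ iff it lies in each $\Pi_i^*=P_i{}_{v_i}^*$, i.e. iff $\langle\cdot,y\rangle\ge 0$ on each $P_i-v_i$; summing, $\langle\cdot,y\rangle\ge 0$ on $P-v$, so $y\in\Pi^*=P_v^*$. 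Hence $\Pi^*\supseteq(\Pi_1+\ldots+\Pi_n)^*$. Since $\Pi$ is a proper cone (it is the tangent cone of the full-dimensional polytope $P$ at a vertex, by Section~\ref{se:NF}) and each $\Pi_i$ is convex, the Minkowski sum $\Pi_1+\ldots+\Pi_n$ is a convex cone; I would verify it is closed either by noting each $\Pi_i$ is a finitely generated (hence closed) polyhedral cone, so the sum is again finitely generated and closed. Then taking duals of the inclusion $\Pi^*\supseteq(\Pi_1+\ldots+\Pi_n)^*$ and using biduality for closed convex cones gives $\Pi\subseteq\Pi_1+\ldots+\Pi_n$, completing the equality. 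The final assertion $\Pi^*=\Pi_1^*\cap\ldots\cap\Pi_n^*$ is exactly the dual-cone computation just performed, $(\Pi_1+\ldots+\Pi_n)^*=\bigcap_i\Pi_i^*$, combined with $\Pi=\Pi_1+\ldots+\Pi_n$.

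\textbf{Main obstacle.} The only genuinely delicate point is showing the map is \emph{well-defined}, i.e. that a vertex $v$ of the sum determines each $v_i$ unambiguously; everything else is bookkeeping with polyhedral cones and duality. I would handle this by pinning $v_i$ down through the equivalence "$v_i$ is the unique vertex of $P_i$ minimizing $\langle\cdot,y\rangle$ for \emph{every} $y\in\inte(P_v^*)$," which follows because $\inte(P_v^*)$ is connected and contained in the union of the (relatively open) cones of the common refinement, so it must lie in a single full-dimensional refinement cone on which all the minimizing vertices are locally constant. One should also note at the outset that $P_1+\ldots+P_n$ need not itself be full-dimensional even if the $P_i$ are not, but in the application (Theorem~\ref{th:main-mixed-real}) it is; in general one works inside the affine hull, and the statement and proof go through verbatim after restricting to $\aff(P)-v$.
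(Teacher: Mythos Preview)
Your proposal is correct and follows essentially the same route as the paper. Both arguments pin down the decomposition $v=v_1+\ldots+v_n$ by choosing a linear functional $\omega$ (your $y$) strictly minimized on $P$ at $v$ and observing that it is then strictly minimized on each $P_i$ at a unique vertex $v_i$; the paper packages this via the face-additivity formula $F(P_1+\ldots+P_n,\omega)=\sum_i F(P_i,\omega)$ from Schneider and cites Fukuda~\cite{fukuda:04} for the uniqueness of the $v_i$, whereas you give a direct connectedness argument in $\inte(P_v^*)$ for well-definedness. For the cone identity $\Pi=\Pi_1+\ldots+\Pi_n$ the paper simply declares the remaining assertions ``clear,'' while you spell out the biduality argument. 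So your version is more self-contained, the paper's terser but reference-dependent. One small remark: your alternative ``intrinsic'' characterization of $v_i$ (``the summand vertex with $v\in v_i+(P-v_1-\ldots-v_n)$'') is circular as written, since it uses all the $v_j$ to define $v_i$; just drop it and rely on the connectedness argument, which is sound.
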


\begin{proof}
To a nonzero weight $\omega\in\R^n$ we assign the face of $P_i$, given by 
$$
 F(P_i,\omega) := \big\{ w\in\R^n \mid \langle w,\omega\rangle = \min_{w'\in P_i}  \langle w',\omega\rangle \big\} .
$$
We have by \cite[Thm.~1.7.5]{schneider:14} 
$$
 F(P_1+\ldots+P_n,\omega) = F(P_1,\omega) + \ldots + F(P_n,\omega) .
$$
Suppose that $F(P_1+\ldots+P_n,\omega) =\{v\}$ is a vertex. 
Then all $F(P_i,\omega)=\{v_i\}$ are vertices and $v=v_1+\dots +v_n$.
The $v_i$ are uniquely determined by~$v$, see~\cite[Prop.~2.1]{fukuda:04}.
Then the map $v\mapsto (v_1,\ldots,v_n)$ is as required.
The remaining assertions are clear.
\end{proof}

Lemma~\ref{le:V-sum-polytope} implies 
$V_0(P_1+\ldots+P_n) \le V_0(P_1)\cdots V_0(P_n)$.
This bound is sharp, see~\cite{fukuda-weibel:07,karavelas-tzanaki:11}. 

\section{Random intersections in products of projective spaces}

\subsection{The kinematic formula}

We specialize here the general kinematic formula for homogeneous spaces 
from~\cite[Thm.~A.2]{BL:19} to the case of products of real projective spaces (Theorem~\ref{th:KF}). 
For this purpose, we define the average scaling factor 
and we explain how to bound it in Lemma~\ref{le:select}. 

Consider the product 
$\Omega:=\proj^{m_1}\times\cdots\times\proj^{m_n}$ of real projective spaces.
The product $G:=O(m_1+1)\times\cdots\times O(m_n+1)$ of orthogonal groups acts transitively on $\Omega$. 
So $\Omega$ is a homogeneous space and we have an induced transitive action of $G$ on the tangent bundle of $\Omega$. 
We focus on the special hypersurfaces $H_1,\ldots,H_n$ of $\Omega$ of the following shape
\begin{equation}\label{eq:shape}
 H_1 := \proj^{m_1-1}\times \proj^{m_2}\times\cdots\times\proj^{m_n},\ldots, 
 H_n := \proj^{m_1}\times\proj^{m_2}\cdots\times\proj^{m_n-1} .
\end{equation}
They are determined upon selecting hyperplanes $\proj^{m_i-1}$ in each $\proj^{m_i}$. 
Our goal is to investigate the average cardinality of the intersection 
$Z\cap H_1\cap\ldots\cap H_n$ of  an $n$-dimensional smooth submanifold $Z\subseteq \Omega$ 
with random $H_i$, which are defined by replacing the fixed $\proj^{m_i-1}$ by 
independently chosen uniform random hyperplanes in~$\proj^{m_i}$. 

Fix a distinguished point~$\omega\in\Omega$ and denote by $K$ the stabilizer group of~$\omega$. 
E.g., take $\omega_i=[1:0\ldots:0]$ for all~$i$.
Notice that we have an induced action of $K$ on the tangent space $T :=T_{\omega} \Omega$, which 
we can identify with the standard action of $K=O(m_1)\times\cdots\times O(m_n)$ on 
$T=\R^{m_1}\times\cdots\times\R^{m_n}$.  This induces an action of $K$ on the 
Grassmann manifold $\Gr(d,T)$ of linear subspaces of  $T$ with codimension~$d$. 
Note that this action is transitive if $n=1$, but not for $n\ge 2$. 


We assign to an $n$-dimensional smooth submanifold $Z\subseteq \Omega$ a map
\begin{equation}\label{eq:defmap}
 Z \to \Gr(n,T)/K,\ p \mapsto KgN_pZ 
\end{equation}
as follows. 
For given $p\in Z$ choose any 
$g\in G$ such that $gp =\omega$. 
The induced action of $g$ maps the tangent space 
$T_{p} \Omega$ to $T_{\omega} \Omega=T$. 
This transports the normal subspace
$N_pZ\subseteq T_p\Omega$ of~$Z$ at~$p$ to  
$gN_pZ\subseteq T$. 
Note that the $K$-orbit of the subspace 
$gN_pZ$ does not depend on the choice of~$g$, 
which shows that the map~\eqref{eq:defmap}
is well defined. 

We call the submanifold $Z$ {\em cohomogeneous} if 
the map~\eqref{eq:defmap} is constant; 
see \cite[A.5.1]{BL:19} and \cite{matis-thesis:22}. 
For instance, a product 
$Z=\cL_1\times\ldots\times\cL_n$ 
of lines $\cL_i$ in $\proj^{m_i}$ 
is cohomogeneous: indeed, 
the map~\eqref{eq:defmap} sends any point $p\in Z$ to the $K$-orbit of 
$\R\times \ldots \times \R$.

\begin{defi}\label{def:asf}
The \emph{average scaling factor function} of the $n$-dimensional 
submanifold $Z$ of $\proj^{m_1}\times\cdots\times\proj^{m_n}$ 
is the function $\overline{\sigma}_Z\colon Z\to [0,1]$ 
defined at $p\in Z$ by 
$$ 
 \overline{\s}_Z(p) := \E_{L_i}\s(gN_pZ,L_1\times \ldots \times L_n) ,
$$
where $g\in G$ satisfies $gp =\omega$, and  
the expectation is taken over uniformly random lines $L_i$ in 
$T=\R^{m_1}\times\cdots\times\R^{m_n}$; 
see~\eqref{eq:def-sigma} for the definition of~$\sigma$. 
\end{defi}

Note that due to the averaging over the $K$-orbit, the choice of $g$ is irrelevant. 
The above definition is consistent with the one in \cite[Def.~A.1]{BL:19}, since
\begin{eqnarray}\notag
\lefteqn{\s(gN_pZ,L_1\times \ldots \times L_n) }  \\ \label{eq:s-equal}
 &= \s(gN_pZ,L_1\times 0\times \cdots\times 0,\ldots,0\times \cdots\times 0\times L_n) 
\end{eqnarray}
by \eqref{eq:reduce-sigma}; indeed note that the $n$ lines 
$L_1\times 0\times \cdots\times 0$, ...
are pairwise orthogonal.


We introduce the notation 
$$ 
\rho_n := \E\|x\| = \sqrt{2}\, \frac{\Gamma(\frac{n+1}{2})}{\Gamma(\frac{n}{2})} \ \le\ \sqrt{n}
$$  
for standard Gaussian $x\in\R^n$ and 
note that \cite[Lemma~2.25]{Condition}, 
\begin{equation}\label{eq:projvol}
  \frac{\vol(\proj^{m_i-1})}{\vol(\proj^{m_i})} = \frac{1}{\sqrt{\pi}} \frac{\Gamma(\frac{m_i+1}{2})}{\Gamma(\frac{m_i}{2})} 
   = \frac{1}{\sqrt{2\pi}} \rho_{m_i} . 
\end{equation}

We can now explicitly state the kinematic formula for products of real projective spaces.

\begin{thm}\label{th:KF}
For any $n$-dimensional submanifold $Z$ of 
$\proj^{m_1}\times\cdots\times\proj^{m_n}$,
we have 
$$
 \E_{g\in G} \#(Z\cap g_1H_1\cap \ldots \cap g_nH_n) = 
   (2\pi)^{-\frac{n}{2}} \rho_{m_1}\cdots \rho_{m_n} \int_Z \overline{\s}_Z \, dZ ,
$$
where the hypersurfaces $H_i$ are defined in~\eqref{eq:shape}. 
\end{thm}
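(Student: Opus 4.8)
The plan is to derive Theorem~\ref{th:KF} as a specialization of the general kinematic formula for homogeneous spaces, \cite[Thm.~A.2]{BL:19}, applied to $\Omega=\proj^{m_1}\times\cdots\times\proj^{m_n}$ with group $G=O(m_1+1)\times\cdots\times O(m_n+1)$ acting on the tangent bundle, and with the family of hypersurfaces $H_1,\dots,H_n$ of~\eqref{eq:shape}. The abstract formula expresses $\E_{g\in G}\#(Z\cap g_1H_1\cap\cdots\cap g_nH_n)$ as an integral over $Z$ of an average scaling factor (in the sense of Definition~\ref{def:asf}, equivalently \cite[Def.~A.1]{BL:19}) against a product of normalization constants, one for each hypersurface $H_i$. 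So the two things to pin down are: (i) that the submanifold-dependent integrand produced by \cite[Thm.~A.2]{BL:19} is exactly $\overline{\sigma}_Z$ as we have defined it, and (ii) that the constant in front is $(2\pi)^{-n/2}\rho_{m_1}\cdots\rho_{m_n}$.

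For (i), the integrand in \cite[Thm.~A.2]{BL:19} is the expectation of $\sigma$ between the transported normal space $gN_pZ$ and the (transported) normal spaces of the $g_iH_i$ at the intersection point, averaged over the $g_i$. Since $H_i=\proj^{m_1}\times\cdots\times\proj^{m_i-1}\times\cdots\times\proj^{m_n}$ is, at the base point $\omega$, a product with the $i$-th factor a hyperplane, its normal space inside $T=\R^{m_1}\times\cdots\times\R^{m_n}$ is $0\times\cdots\times \ell_i\times\cdots\times 0$ for a line $\ell_i\subseteq\R^{m_i}$; averaging over $g_i\in O(m_i+1)$ makes $\ell_i$ a uniform random line in $\R^{m_i}$. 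The $n$ such lines sit in pairwise orthogonal factors, so by \eqref{eq:reduce-sigma} and \eqref{eq:s-equal} the multi-subspace $\sigma$ collapses to $\sigma(gN_pZ,\,L_1\times\cdots\times L_n)$, which is precisely the integrand of $\overline{\sigma}_Z(p)$. (The invariance under the choice of $g$ with $gp=\omega$ is built into the definition via the $K$-average, so there is nothing extra to check.) For (ii), the constant in the kinematic formula is a product over $i=1,\dots,n$ of the factor associated to $H_i$, and each such factor is the "density" of the corresponding hyperplane section, namely $\vol(\proj^{m_i-1})/\vol(\proj^{m_i})$; by \eqref{eq:projvol} this equals $(2\pi)^{-1/2}\rho_{m_i}$, and multiplying these gives $(2\pi)^{-n/2}\rho_{m_1}\cdots\rho_{m_n}$.

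Concretely I would first recall the statement of \cite[Thm.~A.2]{BL:19} in the form it is needed, identify the homogeneous space, the group, the stabilizer $K$ at $\omega$, and the induced $K$-action on $T=\R^{m_1}\times\cdots\times\R^{m_n}$ as the standard action of $O(m_1)\times\cdots\times O(m_n)$. Then I would observe that each $H_i$ is a $K$-invariant-up-to-$G$-translate hypersurface whose orbit under $g_i$ is exactly the family of uniform random hyperplanes in the $i$-th factor, compute its normal space at $\omega$ and read off its contribution to both the integrand and the constant. Substituting these into \cite[Thm.~A.2]{BL:19}, using \eqref{eq:reduce-sigma}–\eqref{eq:s-equal} to rewrite the integrand as $\overline{\sigma}_Z$ and \eqref{eq:projvol} to rewrite the constant, yields the claimed formula.

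The main obstacle is purely bookkeeping: matching the normalization conventions of \cite[Thm.~A.2]{BL:19} (how it normalizes the Haar measure on $G$, the Riemannian volumes on $\Omega$ and on the $H_i$, and the definition of its integrand relative to our $\sigma$) so that the constants come out exactly as stated, and verifying carefully that the several random hyperplanes produce normal spaces lying in mutually orthogonal factors so that the multi-subspace $\sigma$ genuinely reduces via \eqref{eq:reduce-sigma}. I expect no conceptual difficulty beyond this, but the constant-chasing is where an error could hide, so that is where I would be most careful.
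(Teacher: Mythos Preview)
Your proposal is correct and follows essentially the same route as the paper: invoke \cite[Thm.~A.2]{BL:19}, use the orthogonality of the normal lines together with \eqref{eq:reduce-sigma}--\eqref{eq:s-equal} to identify the integrand as $\overline{\sigma}_Z$ (constant in the $H_i$ variables, so the integral over $H_1\times\cdots\times H_n$ factors out as a product of volumes), and then read off the constant via \eqref{eq:projvol}. The only cosmetic difference is that the paper phrases the constant as $\prod_i \vol(H_i)/\vol(\Omega)^n$ before simplifying, whereas you jump directly to $\prod_i \vol(\proj^{m_i-1})/\vol(\proj^{m_i})$.
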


\begin{proof} 
If $\s_K\colon Z\times H_1\times\ldots \times H_n\to [0,1]$ denotes 
the average scaling function from~\cite[Def.~A.1]{BL:19}, then~\cite[Thm.~A.2]{BL:19} states that 
\begin{eqnarray*}
  \lefteqn{\E_{g\in G} \#(Z\cap g_1H_1\cap \ldots \cap g_nH_n) } \\
 &=  \frac{1}{\vol(\Omega)^n} \int_{Z\times H_1 \times \ldots \times H_n} \s_K \, d(Z\times H_1 \times \ldots \times H_n) .
\end{eqnarray*}
By $K$-invariance and~\eqref{eq:s-equal}, we have  
$\s_K(z,y_1,\ldots,y_n)= \overline{\s}_Z(z)$ 
for all $z\in Z$ and $y_i \in H_i$. 
Therefore, 
\[
  \E_{g\in G} \#(Z\cap g_1H_1\cap \ldots \cap g_nH_n) = 
  \frac{\vol(H_1) \cdots \vol(H_n)}{\vol(\Omega)^n}  \int_{Z} \overline{\s}_Z \, dZ .
\]
Finally, \eqref{eq:projvol} gives 
\begin{equation*}
 \frac{\vol(H_1) \cdots \vol(H_n)}{\vol(\Omega)^n} = \prod_{i=1}^n \frac{\vol(\proj^{m_i-1})}{\vol(\proj^{m_i})} 
 =  \frac{\rho_{m_1}\cdots \rho_{m_n}}{ (2\pi)^{\frac{n}{2}} } , 
\end{equation*}
which completes the proof.
\end{proof}

\begin{example}
A product $Z=\cL_1\times\ldots\times\cL_n$ 
of lines $\cL_i$ is cohomogeneous 
and we have $\overline{\s}_Z = (2/\pi)^{n/2} (\rho_{m_1}\cdots \rho_{m_n})^{-1}$ 
by Theorem~\ref{th:KF}.
\end{example}

We shall focus on submanifolds $Z$ arising as the image of an injective map 
\begin{equation}\label{eq:Gmap-psi}
 \psi\colon U \to \proj^{m_1}\times\cdots\times\proj^{m_n},\ 
 \psi(x) := (\psi_1(x),\ldots,\psi_n(x)) ,
\end{equation}
where the $\psi_i\colon U \to \proj^{m_i}$ are smooth maps 
defined on an open subset $U\subseteq \R^n$. 
Let us denote by 
$$
 J\psi(x) := \sqrt{\det ((D_x\psi)^T D_x\psi)}
$$ 
the {\em absolute Jacobian} of $\psi$ at $x$. 
The transformation formula implies that 
\begin{equation}\label{eq:topsy}
 \int_Z \overline{\s}_Z \, dZ = \int_U \overline{\s}_Z(\psi(x)) J\psi(x) \, dx.
\end{equation}
We next analyze the integrand on the right-hand side more closely. 

\begin{lemma}\label{le:asf}
Let $x\in U$ and put $T_i := T_{\psi_i(x)}\proj^{m_i}$. 
Let $\lambda_1,\ldots,\lambda_n$ be  independent standard Gaussian 
linear forms on $T_i$. This defines the random linear forms 
$\lambda_i\circ D_x \psi_i$ on $\R^n$. 
Then 
\begin{eqnarray*}
 \lefteqn{ \rho_{m_1}\cdots \rho_{m_n} \overline{\s}_Z(\psi(x)) J\psi(x) } \\ 
  &= \E_{\lambda_1,\ldots,\lambda_n} 
   \left\| (\lambda_1 \circ D_x \psi_1) \wedge\ldots\wedge (\lambda_n\circ D_x \psi_n) \right\| .
\end{eqnarray*}
\end{lemma}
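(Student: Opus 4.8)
\textbf{Proof plan for Lemma~\ref{le:asf}.}
The plan is to unwind both the definition of $\overline{\s}_Z$ and the definition of $J\psi$ and recognize that the two expressions coincide term by term after choosing convenient coordinates. First I would fix $x\in U$ and abbreviate $D_i := D_x\psi_i\colon\R^n\to T_i$, so that $D_x\psi = (D_1,\ldots,D_n)\colon \R^n\to T_1\times\cdots\times T_n =: T$. Choosing $g\in G$ with $g\psi(x)=\omega$, the differential $dg$ is a Euclidean isometry $T_{\psi(x)}\Omega\to T$ that respects the product decomposition (it acts block-diagonally, as $g=(g_1,\ldots,g_n)$), so it carries the image of $D_x\psi$ to an $n$-dimensional subspace $V:=gN_pZ^\perp = g(\mathrm{im}\,D_x\psi)$ of $T$ with the same intrinsic geometry; in particular $J\psi(x) = \|(dg\circ D_x\psi)e_1 \wedge\cdots\wedge (dg\circ D_x\psi)e_n\|$ where $e_1,\ldots,e_n$ is the standard basis of $\R^n$. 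Throughout I will identify $T_i$ with $\R^{m_i}$ via $g_i$ and simply work in $T=\R^{m_1}\times\cdots\times\R^{m_n}$, writing $D_i$ for $g_iD_x\psi_i$.

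The key computation is to evaluate $\rho_{m_1}\cdots\rho_{m_n}\,\overline{\s}_Z(\psi(x))$. By Definition~\ref{def:asf} and~\eqref{eq:reduce-sigma} via~\eqref{eq:s-equal}, $\overline{\s}_Z(\psi(x)) = \E_{L_1,\ldots,L_n}\s\bigl(N,\,L_1'\times\cdots\times L_n'\bigr)$, where $N = (\mathrm{im}\,D_x\psi)^\perp$ under the identification, $L_i$ is a uniform random line in $\R^{m_i}$, and $L_i' = 0\times\cdots\times L_i\times\cdots\times 0 \subseteq T$. Writing $W := L_1'\oplus\cdots\oplus L_n'$, which is an $n$-dimensional subspace, Proposition~\ref{prop:sigma-symm} gives $\s(N,W) = \s(N^\perp, W^\perp) = |\det p|$ where $p\colon N^\perp \to W$ is the restriction of orthogonal projection. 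Since $N^\perp = V = \mathrm{im}\,D_x\psi$ is spanned by $v_j := D_x\psi(e_j) = (D_1 e_j,\ldots,D_n e_j)$, one has $\s(N,W) = \|v_1\wedge\cdots\wedge v_n\|^{-1}\,\|(\Pi_W v_1)\wedge\cdots\wedge(\Pi_W v_n)\|$ when the $v_j$ are independent, by the determinant formula for $\sigma$ applied to an orthonormal basis of $V$ versus the $v_j$'s, i.e. $\s(N,W) = J\psi(x)^{-1}\cdot\bigl|\det(\text{matrix of }\Pi_W v_j\text{ in an orthonormal basis of }W)\bigr|$. Now choosing for each $i$ a unit vector $u_i$ spanning $L_i$, the matrix of $\Pi_W v_j$ in the basis $(u_1,\ldots,u_n)$ of $W$ has $(i,j)$ entry $\langle D_i e_j, u_i\rangle$, so its determinant is $\det\bigl(\langle D_i e_j, u_i\rangle\bigr)_{i,j} = (\mu_1\circ D_1)\wedge\cdots\wedge(\mu_n\circ D_n)$ evaluated on $e_1\wedge\cdots\wedge e_n$, where $\mu_i := \langle\,\cdot\,,u_i\rangle$ is the unit linear form dual to $u_i$. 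Hence $J\psi(x)\,\overline{\s}_Z(\psi(x)) = \E_{u_1,\ldots,u_n}\bigl\|(\mu_1\circ D_1)\wedge\cdots\wedge(\mu_n\circ D_n)\bigr\|$ with $u_i$ uniform on the sphere in $T_i$.

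It remains to replace the uniform unit forms $\mu_i$ by independent standard Gaussian forms $\lambda_i$ on $T_i$. Writing $\lambda_i = r_i\mu_i$ with $r_i = \|\lambda_i\|$ and $\mu_i = \lambda_i/\|\lambda_i\|$ uniform on the unit sphere and independent of $r_i$, and using multilinearity of the wedge product in each factor $\lambda_i\circ D_i = r_i(\mu_i\circ D_i)$, we get $\E_{\lambda}\bigl\|(\lambda_1\circ D_1)\wedge\cdots\wedge(\lambda_n\circ D_n)\bigr\| = \E[r_1\cdots r_n]\cdot\E_{\mu}\bigl\|(\mu_1\circ D_1)\wedge\cdots\wedge(\mu_n\circ D_n)\bigr\| = \rho_{m_1}\cdots\rho_{m_n}\cdot J\psi(x)\,\overline{\s}_Z(\psi(x))$, since $\E r_i = \rho_{m_i}$ by definition and the $r_i$ are independent. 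Finally, because $g_i\colon T_{\psi_i(x)}\proj^{m_i}\to T_i$ is an isometry, the standard Gaussian form $\lambda_i$ on $T_i$ pulls back to a standard Gaussian form on $T_{\psi_i(x)}\proj^{m_i}$ and $\lambda_i\circ D_i = \lambda_i\circ g_i\circ D_x\psi_i$, so the right-hand side is exactly $\E_{\lambda_1,\ldots,\lambda_n}\|(\lambda_1\circ D_x\psi_1)\wedge\cdots\wedge(\lambda_n\circ D_x\psi_n)\|$ as claimed. The main obstacle is bookkeeping in the middle step: one must verify carefully that the determinant formula for $\sigma$ together with Proposition~\ref{prop:sigma-symm} really produces the projection matrix $\bigl(\langle D_i e_j, u_i\rangle\bigr)$ with the factor $J\psi(x)^{-1}$, including the degenerate cases where the $v_j$ fail to be independent (there $J\psi(x)=0$ and both sides vanish, so the identity holds trivially).
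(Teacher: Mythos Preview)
Your proposal is correct and follows essentially the same route as the paper's proof: both arguments use Proposition~\ref{prop:sigma-symm} to rewrite $\sigma(N_pZ,L_1\times\cdots\times L_n)$ as the absolute determinant of the orthogonal projection $T_pZ\to L_1\times\cdots\times L_n$, then factor this projection through $D_x\psi$ to produce the matrix with entries $\langle D_ie_j,u_i\rangle$ (equivalently the wedge $(\mu_1\circ D_1)\wedge\cdots\wedge(\mu_n\circ D_n)$), and finally pass from uniform unit forms $\mu_i$ to standard Gaussian forms $\lambda_i$ via the polar decomposition $\lambda_i=r_i\mu_i$ with $\E r_i=\rho_{m_i}$. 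Your treatment is slightly more explicit about the isometry $g$ and the degenerate case $J\psi(x)=0$, but the substance is the same.
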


\begin{proof}
To simplify notation, we assume w.l.o.g.\ that $\omega=\psi(x)$ is the distinguished point.  
We also identify $T_i$ with $\R^{m_i}$. 
For $u_i\in T_i$ with $\|u_i\|=1$  consider the line $L_i=\R u_i$
and the orthogonal projection $p_{i}\colon T_i \to L_i$, which is 
is given by $p_{i}(w) = \mu_i(w)u_i$ with the 
linear form on~$T_i$ defined by 
$\mu_i(w) := \langle w, u_i\rangle$.
Thus the orthogonal projection 
$p_L\colon T_1\times\cdots\times T_n\to L_1\times\cdots\times L_n$ 
is described by $\mu_1,\ldots,\mu_n$. This implies that 
\begin{equation}\label{eq:mun}
 |\det(p_L \circ D_x \psi)| = 
 \left\| (\mu_1 \circ D_x \psi_1) \wedge\ldots\wedge (\mu_n\circ D_x \psi_n) \right\| .
\end{equation}
On the other hand,
according to Proposition~\ref{prop:sigma-symm}, we have 
$$
\s(L_1\times\cdots\times L_n,N_pZ) = |\det p'_L| ,
$$
where $p'_L\colon T_pZ \to L_1\times\cdots\times L_n$ 
denotes the restriction of $p_L$ to~$T_pZ$. 
Applying the determinant to the composition of $D_x\psi$ with~$p'_L$, we get 
$$
  J\psi(x) \, |\det p'_L| =  |\det(p_L \circ D_x\psi)| . 
$$
By averaging over random lines $L_i$, we deduce from the definition of 
$\overline{\s}_Z$ and the above that 
\begin{eqnarray*}
 \lefteqn{ J\psi(x) \overline{\s}_Z(p) =  J\psi(x)\, \E_{L_i} \s(N_pZ,L_1\times\cdots\times L_n) }\\
  &= J\psi(x)\, \E_{L_i} |\det p'_L| = \E_{L_i}|\det(p_L \circ D_\psi)| .
\end{eqnarray*} 
Finally, a standard Gaussian linear form on $T_i$ is obtained as $\lambda_i = r_i \mu_i$
with independent random variables $r_i$ and $u_i$, where $u_i$ 
is uniformly random in the unit sphere of $T_i$ 
and $r_i^2$ is $\chi^2$-distributed with $m_i$ degrees of 
freedom. Thus $\E r_i = \rho_{m_i}$.
Altogether, we obtain, using~\eqref{eq:mun}, 
\begin{eqnarray*}
 \lefteqn{\rho_{m_1}\cdots \rho_{m_n} J\psi(x) \overline{\s}_Z(p) =\rho_{m_1}\cdots \rho_{m_n} \E |\det(p_L \circ D_\psi)| } \\
 &=\rho_{m_1}\cdots \rho_{m_n} \E \left\| (\mu_1 \circ D_x \psi_1) \wedge\ldots\wedge (\mu_n\circ D_x \psi_n) \right\| \\
 &=\E \left\| (\lambda_1 \circ D_x \psi_1) \wedge\ldots\wedge (\lambda_n\circ D_x \psi_n) \right\|  ,
\end{eqnarray*} 
which completes the proof. 
\end{proof}

\subsection{Bounding the average scaling factor}\label{se:Bd}

In order to bound the quantity in Lemma~\ref{le:asf},
we use affine charts for the product of projective spaces.  
Let $y_{i0},\ldots,y_{im_i}$ be coordinates for~$\proj^{m_i}$.
Fix $0 \le r_i\le m_i$ for $i=1,\ldots,n$, and consider
the inverse of the affine chart 
$\pi_{i r_i}\colon \R^{m_i} \to (\proj^{m_i})_{y_{ir_i}\ne 0}$, 
see Subsection~\ref{se:Pproj}. 
We describe the maps $\psi_i $ from \eqref{eq:Gmap-psi}
in these charts by smooth functions 
defined on open subsets of $\R^n$, 
\begin{equation}\label{eq:def-varphi}
\varphi_{ir_i} \colon \R^n\supseteq U_{ir_i}  \to \R^{m_i} ,
\end{equation}
satisfying  
$\psi_i := \pi_{ir_i} \circ \varphi_{ir_i}$.
In order to simplify notation, we assume w.l.og.\  
$r_i=0$ and write $\pi_{i} := \pi_{i 0}$, $\varphi_{i}:= \varphi_{i0}$.  
In these charts, the combined map $\psi$ of \eqref{eq:Gmap-psi} is represented by 
a map 
$$
 \varphi\colon U\to \R^{m_1}\times\cdots\times\R^{m_n}, \, 
 \varphi(x) = (\varphi_{1}(x),\ldots,\varphi_{n}(x)) 
$$
defined on some open subset $U\subseteq\R^n$. 
We view the derivative  $M(x):=D_x\varphi$ as a matrix of format $(m_1+\ldots+m_n)\times n$ 
with blocks $M_i(x) := D_x\varphi_i\in\R^{m_i\times n}$. 
For $1\le j_i\le m_i$, $i=1,\ldots,n$, we denote by $M(x)_{j_1,\ldots,j_n}$ 
the $n\times n$ submatrix of $M(x)$ obtained by selecting in the $i$th block the $j_i$th row.

\begin{lemma}\label{le:select}
Let $x\in U$ such that $[y_i]:=\psi_i(x) \in   (\proj^{m_i})_{y_{i0} \ne 0}$ for all~$i$. 
Then 
$$
  \rho_{m_1}\cdots \rho_{m_n} \,\overline{\s}_Z (\psi(x))\, J\psi(x) \ \le\ 
 \sum_{j_1,\ldots,j_n} |\det M(x)_{j_1,\ldots,j_n}| ,
$$
where the sum is over $n$-tuples 
$(j_1,\ldots,j_n) \in [m_1]\times\ldots\times [m_n]$.
\end{lemma}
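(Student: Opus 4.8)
The plan is to combine Lemma~\ref{le:asf} with the affine chart description of the $\psi_i$ and then expand the wedge product in coordinates. Starting from Lemma~\ref{le:asf}, and using $\psi_i = \pi_i \circ \varphi_i$ together with the chain rule $D_x\psi_i = D_{\varphi_i(x)}\pi_i \circ D_x\varphi_i = D_{\varphi_i(x)}\pi_i \circ M_i(x)$, I would first replace each linear form $\lambda_i \circ D_x\psi_i$ on $\R^n$ by $(\lambda_i \circ D_{\varphi_i(x)}\pi_i) \circ M_i(x)$. Setting $\mu_i := \lambda_i \circ D_{\varphi_i(x)}\pi_i$, which is a linear form on $\R^{m_i}$, this exhibits $\lambda_i \circ D_x\psi_i = \mu_i \circ M_i(x)$ as a linear form on $\R^n$. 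Then $\rho_{m_1}\cdots\rho_{m_n}\,\overline{\s}_Z(\psi(x))\,J\psi(x) = \E_{\lambda}\big\|(\mu_1\circ M_1)\wedge\cdots\wedge(\mu_n\circ M_n)\big\|$.

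Next I would write $\mu_i = \sum_{j=1}^{m_i} \mu_{ij}\, e_j^*$ in the standard dual basis of $\R^{m_i}$, so that $\mu_i\circ M_i(x) = \sum_{j=1}^{m_i} \mu_{ij}\, (\text{$j$th row of }M_i(x))$, viewing rows as elements of $(\R^n)^*$. Expanding the wedge product multilinearly,
$$
(\mu_1\circ M_1)\wedge\cdots\wedge(\mu_n\circ M_n) = \sum_{(j_1,\ldots,j_n)} \mu_{1 j_1}\cdots\mu_{n j_n}\, R_{1 j_1}\wedge\cdots\wedge R_{n j_n},
$$
where $R_{i j}$ denotes the $j$th row of $M_i(x)$. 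Since $R_{1 j_1}\wedge\cdots\wedge R_{n j_n}$ is an element of $\Lambda^n(\R^n)^*$, its norm is $|\det M(x)_{j_1,\ldots,j_n}|$. By the triangle inequality for the norm on $\Lambda^n$,
$$
\big\|(\mu_1\circ M_1)\wedge\cdots\wedge(\mu_n\circ M_n)\big\| \le \sum_{(j_1,\ldots,j_n)} |\mu_{1 j_1}|\cdots|\mu_{n j_n}|\; |\det M(x)_{j_1,\ldots,j_n}|.
$$
Taking expectations, independence of the $\lambda_i$ (hence of the $\mu_i$) across $i$ lets me factor $\E|\mu_{1 j_1}\cdots\mu_{n j_n}| = \prod_i \E|\mu_{i j_i}|$. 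The remaining point is to show $\E|\mu_{i j}| \le 1$ for each fixed $i,j$: the coordinate $\mu_{ij}$ is a centered Gaussian (being a linear functional of the standard Gaussian $\lambda_i$), so $\E|\mu_{ij}| = \sqrt{2/\pi}\;\sqrt{\mathrm{Var}(\mu_{ij})}$, and I would bound $\mathrm{Var}(\mu_{ij}) \le \|D_{\varphi_i(x)}\pi_i\|^2 \le 1$ using the operator-norm bound~\eqref{eq:der-pi} from Subsection~\ref{se:Pproj}, since $\mu_{ij} = \lambda_i(D_{\varphi_i(x)}\pi_i\, e_j)$ so $\mathrm{Var}(\mu_{ij}) = \|D_{\varphi_i(x)}\pi_i\, e_j\|^2 \le \|D_{\varphi_i(x)}\pi_i\|^2$. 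Thus $\E|\mu_{ij}| \le \sqrt{2/\pi} \le 1$, and assembling everything gives the claimed inequality.

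The main obstacle I anticipate is getting the bookkeeping right in the multilinear expansion and making sure the norm identity $\|R_{1 j_1}\wedge\cdots\wedge R_{n j_n}\| = |\det M(x)_{j_1,\ldots,j_n}|$ is applied correctly — this is exactly the concrete formula $\|v_1\wedge\cdots\wedge v_n\| = |\det[v_1,\ldots,v_n]|$ recalled in Subsection~\ref{se:ASF}, with the $v_i$ being the selected rows. A secondary subtlety is the clean justification that $\mu_{ij}$ is a centered Gaussian with the stated variance bound and that $\E|N(0,\sigma^2)| = \sigma\sqrt{2/\pi} \le \sigma$; this is elementary but should be stated carefully since it is where the chart-derivative bound~\eqref{eq:der-pi} enters. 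Everything else is routine linearity of expectation and independence.
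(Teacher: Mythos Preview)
Your argument is correct and follows essentially the same route as the paper: pass to affine charts via $\psi_i=\pi_i\circ\varphi_i$, invoke Lemma~\ref{le:asf}, and control the resulting determinant by the $n\times n$ minors $M(x)_{j_1,\ldots,j_n}$ using the operator norm bound~\eqref{eq:der-pi}. The only difference is a minor technical one at the last step: the paper squares, applies Cauchy--Binet and $\E w_{ij}^2\le 1$, and then uses $\sqrt{\sum a_k^2}\le\sum|a_k|$, whereas you expand the wedge multilinearly, use the triangle inequality, and bound $\E|\mu_{ij}|\le\sqrt{2/\pi}\le 1$ directly; your variant is slightly more direct and even picks up an unused factor $(2/\pi)^{n/2}$.
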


\begin{proof}
From $\psi_i = \pi_i \circ \varphi_i$ we get 
$D\psi_i = D\pi_i \circ D\varphi_i$, 
where we drop arguments for notational simplicity. 
Let $\lambda_i \colon T_i\to\R$ be a linear form on $T_i = T_{\psi_i(x)}\proj^{m_i}$.
Then, defining $w_i := \lambda_i \circ D \pi_i$,
$$
 \lambda_i \circ D\psi_i = \lambda_i \circ D\pi_i \circ D\varphi_i = w_i \circ D\varphi_i .
$$ 
If we identify  
$\lambda_i \circ D_x\psi_i$
with a vector in $\R^{n}$ 
and $w_i$ with a vector in $\R^{m_i}$, then 
we have the matrix product 
of formats $n\times \sum_i {m_i}$ and $ \sum_i {m_i}\times n$, 
\begin{equation}\label{eq:R(x)}
R(x):= 
\begin{bmatrix}
(\lambda_1 \circ D_x \psi_1)^T \\
\vdots\\
(\lambda_n \circ D_x \psi_1)^T
\end{bmatrix}
=
\begin{bmatrix}
w_1^T     &         0 & \ldots 0 \\
0        &      w_2^T & \ldots 0 \\
\vdots &  \vdots & \vdots \\
0        &          0 & w_n^T
\end{bmatrix}
\cdot
\begin{bmatrix}
M_1(x)\\
\vdots\\
M_n(x)
\end{bmatrix} .
\end{equation}
Lemma~\ref{le:asf} tells us that 
$$
  \rho_{m_1}\cdots \rho_{m_n} \overline{\s}_Z(\psi(x)) J\psi(x) =\E_{\lambda_i} |\det R(x))| ,
$$
where the expectation is over independent standard Gaussian~$\lambda_i$. 
Note that the resulting random vector  
$w_i := \lambda_i \circ D\pi_i$
is not standard Gaussian anymore.
However $\|D\pi_i\|\le 1$ by \eqref{eq:der-pi},  
and Lemma~\ref{le:SN} below imply that 
$\E w_{ij}^2 \le 1$
for the $j$th component $w_{ij}$ of $w_i$.

From Cauchy-Binet, we obtain from \eqref{eq:R(x)}
$$
 (\det R(x))^2 = \sum_{j_1,\ldots,j_n}  w_{1 j_1}^2 \cdots w_{n j_n}^2 
   (\det M(x)_{j_1,\ldots,j_n} )^2, 
$$ 
where the sum is over all $(j_1,\ldots,j_n) \in [m_1]\times\ldots\times [m_n]$.
Taking expectations yields
$$
 \E_w (\det R(x))^2 \ \le\  \sum_{j_1,\ldots,j_n}  (\det M(x)_{j_1,\ldots,j_n} )^2 .
$$
We conclude that 
\begin{eqnarray*}
 \E_w |\det R(x))| \ \le&\ \big(\E_w (\det R(x))^2 \big)^{\frac12} \\
   \le&\ \sum_{j_1,\ldots,j_n}  |\det M(x)_{j_1,\ldots,j_n}| ,
\end{eqnarray*}
which completes the proof.
\end{proof}

\begin{lemma}\label{le:SN}
Let $A\in\R^{p\times m}$ with $\|A\|\le 1$. 
If $y\in\R^p$ is standard Gaussian, then 
the random variable $z:=yA$ satisfies 
$\E |z_j|^2 \le 1$ for all $j$. 
\end{lemma}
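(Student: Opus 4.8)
The plan is to compute directly the second moment of each coordinate $z_j = (yA)_j$ and use the operator-norm hypothesis. First I would write $z_j = \sum_{i=1}^p y_i A_{ij} = \langle y, A_{\cdot j}\rangle$, where $A_{\cdot j}\in\R^p$ is the $j$th column of $A$. Since $y$ is standard Gaussian in $\R^p$, the random variable $z_j$ is a centered Gaussian with variance $\E z_j^2 = \|A_{\cdot j}\|^2 = \sum_{i=1}^p A_{ij}^2$.

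Next I would bound $\|A_{\cdot j}\|$ by the operator norm of $A$. The column $A_{\cdot j}$ equals $A^T e_j$ where $e_j$ is the $j$th standard basis vector of $\R^m$ (here I am reading $A\colon\R^m\to\R^p$ acting on row vectors from the left, or equivalently $A^T\colon\R^m\to\R^p$). Hence $\|A_{\cdot j}\| = \|A^T e_j\| \le \|A^T\|\,\|e_j\| = \|A\|\le 1$, using $\|A^T\|=\|A\|$ and $\|e_j\|=1$. Combining this with the variance computation gives $\E|z_j|^2 = \|A_{\cdot j}\|^2 \le 1$ for every $j$, which is the claim.

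There is no real obstacle here; the only point requiring a word of care is fixing the convention for which index of $A$ is summed over in the product $z = yA$ (so that $z_j$ is indeed an inner product of $y$ with a column of $A$ of Euclidean norm at most $\|A\|$), and recalling that the operator norm of a matrix equals that of its transpose. Everything else is the elementary fact that a Gaussian linear form $\langle y,v\rangle$ with standard Gaussian $y$ has variance $\|v\|^2$.

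\begin{proof}
Write $z = yA$ so that $z_j = \sum_{i=1}^p y_i A_{ij} = \langle y, a_j\rangle$, where $a_j := (A_{1j},\ldots,A_{pj})^T\in\R^p$ denotes the $j$th column of $A$. Since $y\in\R^p$ is standard Gaussian, $z_j$ is a centered Gaussian with $\E|z_j|^2 = \|a_j\|^2$. Now $a_j = A^T e_j$, where $e_j\in\R^m$ is the $j$th standard basis vector, so
$$
 \|a_j\| = \|A^T e_j\| \le \|A^T\|\,\|e_j\| = \|A\| \le 1 .
$$
Therefore $\E|z_j|^2 = \|a_j\|^2 \le 1$ for all $j$.
\end{proof}
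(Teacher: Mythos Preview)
Your proof is correct and follows essentially the same route as the paper: compute $\E z_j^2 = \sum_i A_{ij}^2 = \|a_j\|^2$ and bound the column norm by the operator norm. One small notational slip: the $j$th column of $A\in\R^{p\times m}$ is $a_j = A e_j$ (not $A^T e_j$) for $e_j\in\R^m$, so the detour through $\|A^T\|=\|A\|$ is unnecessary --- the paper simply writes $\sum_i a_{ij}^2 = \|A(e_j)\|^2 \le \|A\|^2 \le 1$.
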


\begin{proof}
From $z_j = \sum_i y_i a_{ij}$ we get 
$z_j^2 = \sum_{i,k} y_i y_k a_{ij} a_{kj}$.
Hence 
$\E z_j^2 = \sum_{i} a_{ij}^2$.
Finally, $ \sum_{i} a_{ij}^2= \| A(e_j)\|^2 \le \| A\|^2 \le 1$. 
\end{proof}

\section{Mixed random fewnomial systems}\label{se:mixed}

We provide here the proofs of the assertions in the introduction. 
Let us first introduce some notation. 

We assign to a real valued function $c\colon A\to\R$ on 
a finite nonempty subset $A\subseteq\R^n$ 
the real analytic function 
$F_{A,c}\colon\R^n\to\R$
\begin{equation}\label{eq:def_F}
 F_{A,c}(w) := \sum_{a\in A} c(a) e^{\langle a,w\rangle} .
\end{equation}
In the special case where $A$ consists of integer vectors, 
$F_{A,c}$ arises from the Laurent polynomial
$f_{A,c}(x) = \sum_{a\in A} c(a) x^a$ 
by a substitution: $F_{A,c}(w) =f_{A,c}(e^w)$.
Generally, we have the following equivariance property: for $g\in\GL(n,\R)$ and $b\in\R^n$, 
\begin{equation}\label{eq:invar-F}
 F_{A+b,b.c}(w) = e^{\langle b,w\rangle} F_{A,c}(w) ,\ 
 F_{g(A),g.c}(w) = F_{A,c}(g^Tw) , 
\end{equation}
where $b.c(a) := c(a-b)$ and $(g.c)(a) := c(g^{-1}a)$. 

Suppose now we have $n$ such analytic functions encoded by 
$c_i\colon A_i\to\R$, for $i=1,\ldots,n$. 
Throughout, we denote by~$t_i$ the cardinality of $A_i$ and by 
$P_i$ its convex hull. 
We are interested in the number $N$ of nondegenerate zeros $w\in\R^n$
of the system 
\begin{equation}\label{eq:def_F_Syst}
 F_{A_1,c_1}(w)=0,\ldots, F_{A_n,c_n}(w)=0.
\end{equation}
Our goal is to study the expected number of nondegenerate zeros 
for random coefficient functions. More specifically,
we denote by $\E(A_1,\ldots,A_n)$ the expectation of $N$, 
when all the coefficients~$c_i(a)$, for $i\in [n]$ and $a_i\in A_i$,  
are independent standard Gaussians. 
Clearly, $\E(A_1,\ldots,A_n)$ is invariant under permutations of the $A_i$. 
Also, $\E(A_1,\ldots,A_n)=0$ if $t_i=1$ for some~$i$. 
Moreover, we have 
$\E(A_1,\ldots,A_n)=0$ if $\dim(P_1+\ldots +P_n)<n$, 
see Lemma~\ref{le:psi-inj}.
 
Equation~\eqref{eq:invar-F} implies the following invariance properties
\begin{equation}\label{eq:invar-N}
  \begin{split}
 \E(A_1+b_1,\ldots,A_n+b_n) &= \E(A_1,\ldots,A_n), \\
 \E(g(A_1),\ldots,g(A_n)) &= \E(A_1,\ldots,A_n) ,
 \end{split}
\end{equation}
where  $b_1,\ldots,b_n\in\R^n$ and $g\in\GL(n,\R)$. 
In particular, $\E$ is invariant under replacing $A_i$ by $\lambda_i A_i$ 
for $\lambda_i\in\R^\times$.

Our main result is Theorem~\ref{th:main-mixed-real} stated in the introduction. 
Note that it gives the correct answer $\E(A_1,\ldots,A_n)=0$ if $t_i=1$ for some~$i$.

\begin{example}
In the case $t_1=\ldots=t_n=2$, the $P_i$ are segments. If they are linearly independent, 
$P_1+\ldots+P_n$ is a parallelepiped with $2^n$ vertices. Thus, 
Theorem~\ref{th:main-mixed-real} gives 
$\E(A_1,\ldots,A_n) \le (2/\pi)^{\frac{n}{2}}$. 
This can be easily verified directly as follows. 
Suppose 
$A_i=\{a_i,b_i\}$,
where $b_1-a_1,\ldots,b_n-a_n$ are linearly independent.
We claim that $\E(A_1,\ldots,A_n)=2^{-n}$. 
For showing this, by the invariance properties~\eqref{eq:invar-N}, it suffices to consider the 
case where $A_i=\{0,e_i\}$. Then~\eqref{eq:def_F_Syst} 
amounts to the system 
$c_i(0) + c_i(e_i) e^{w_i} = 0$, for $i=1,\ldots,n$, 
which has a solution iff  $c_i(0)c_i(e_i) <0$, for all~$i$. 
This happens with probability $2^{-n}$, hence indeed
$\E(A_1,\ldots,A_n)=2^{-n}$. 
\end{example}


\subsection{Proof of Theorem~\ref{th:main-mixed-real}}\label{se:pf-main-mixed}

Let us look at a special instance of~\eqref{eq:Gmap-psi}.
To the given finite nonempty subsets $A_1,\ldots,A_n\subseteq\R^n$, 
we assign the maps
$$
 \psi_i\colon \R^n_{>0} \to \proj(\R^{A_i}) \simeq \proj^{m_i}, \ 
  \psi_i(x) := [x^{a_i}]_{a_i\in A_i} ,
$$
where $m_i:=\#A_i -1$. 
Recall that $P_i$ denotes the convex hull of $A_i$ and  put $P:=P_1+\ldots+P_n$. 
We consider the combined map
\begin{equation}
 \psi\colon \R^n_{>0} \to \proj^{m_1}\times\cdots\times\proj^{m_n},\ 
 \psi(x) := (\psi_1(x),\ldots,\psi_n(x)) .
\end{equation}

\begin{lemma}\label{le:psi-inj}
The map $\psi$ is injective iff $P$ is $n$-dimensional.
Moreover, if $P$ is not $n$-dimensional, then 
$\mathrm{rank} D_x\psi<n$ for all $x\in \R^n_{>0}$. 
\end{lemma}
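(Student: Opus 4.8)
The plan is to work in the logarithmic coordinates $w=\log x$, exactly as announced in the introduction. Since $x\mapsto w=\log x$ is a diffeomorphism $\R^n_{>0}\to\R^n$, injectivity of $\psi$ is equivalent to injectivity of the composed map $\widetilde\psi(w):=\psi(e^w)$, whose $i$th component is $[e^{\langle a,w\rangle}]_{a\in A_i}$. Two points $w,w'$ have $\widetilde\psi_i(w)=\widetilde\psi_i(w')$ precisely when the vectors $(e^{\langle a,w\rangle})_{a\in A_i}$ and $(e^{\langle a,w'\rangle})_{a\in A_i}$ are proportional, i.e.\ when $\langle a,w-w'\rangle$ is the same for all $a\in A_i$ (the common value being the logarithm of the proportionality factor). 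Writing $u:=w-w'$, this says $\langle a-a', u\rangle=0$ for all $a,a'\in A_i$, i.e.\ $u$ is orthogonal to the linear span of $A_i-A_i$, equivalently to the affine hull direction of $P_i$. Doing this simultaneously for all $i$, we get $\widetilde\psi(w)=\widetilde\psi(w')$ iff $u=w-w'$ is orthogonal to the linear span $L$ of $\bigcup_i(P_i-P_i)$. Now $L$ is exactly the direction space of the affine hull of $P=P_1+\dots+P_n$ (the affine hull of a Minkowski sum is the sum of the affine hulls), so $\widetilde\psi$ is injective iff $L^\perp=0$ iff $\dim\aff(P)=n$ iff $P$ is $n$-dimensional. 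This proves the first assertion.

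For the second assertion, I would differentiate. Each $\psi_i=\pi_i\circ\varphi_i$ in a suitable affine chart, and one computes that the rows of $D_w\widetilde\psi_i$ (after the harmless chart projection) span a subspace of the row space generated by the vectors $\{a-a_0 : a\in A_i\}$ for a fixed reference exponent $a_0\in A_i$; indeed $\partial_{w_k} e^{\langle a,w\rangle} = a_k e^{\langle a,w\rangle}$, and after dividing out by the chart-normalizing coordinate the differential of the $a$-component is a nonzero multiple of $(a-a_0)$ in the relevant coordinates. Hence the image of $D_w\widetilde\psi$ is contained in $\mathbb R\otimes(\text{span of }\bigcup_i(A_i-A_i))$-type data; more precisely the row space of $D_w\widetilde\psi$, viewed back in $\R^n$, annihilates $L^\perp$, so $\operatorname{rank}D_w\widetilde\psi\le\dim L=\dim\aff(P)<n$ whenever $P$ is not $n$-dimensional. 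Since $w\mapsto x=e^w$ has invertible differential, the same rank bound holds for $D_x\psi$, giving $\operatorname{rank}D_x\psi<n$ for all $x\in\R^n_{>0}$.

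The main obstacle is bookkeeping rather than substance: one must be careful that passing through the affine charts $\pi_{ir_i}$ of $\proj^{m_i}$ does not change the rank conclusion. The clean way to avoid chart choices altogether is to lift to the sphere: write $\psi_i$ as $x\mapsto$ the unit vector $(x^{a})_{a\in A_i}/\|(x^a)_{a\in A_i}\|$ and use that $D\psi_i$ is the composition of $D(\text{normalization})$ (whose kernel contains only the radial direction, which is irrelevant here since $(x^a)_a$ never degenerates to $0$) with the linear map $u\mapsto(\langle a,u\rangle\, x^a)_{a\in A_i}$, whose rank is exactly $\dim\aff(P_i)$. Assembling over $i$ and using $\aff(P)=\sum_i\aff(P_i)$ gives the rank of $D_x\psi$ equal to $\dim\aff(P)$ for \emph{every} $x\in\R^n_{>0}$; in particular it is $<n$ when $P$ is not full-dimensional, and this refined computation simultaneously re-proves injectivity on the full-dimensional case via the identification of fibers above. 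I would present the argument in this coordinate-free form and relegate the chart translation to a one-line remark.
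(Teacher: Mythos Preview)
Your proof is correct and follows essentially the same route as the paper: pass to logarithmic coordinates, observe that $\widetilde\psi_i(w)=\widetilde\psi_i(w')$ iff $\langle a,w-w'\rangle$ is constant on $A_i$, and identify the fibre direction with the orthogonal complement of the linear span $L=\operatorname{span}\bigcup_i(A_i-A_i)$, which is the direction space of $\aff(P)$. The paper's derivative argument is the terse one-line version of yours (it exhibits a curve $s\mapsto e^{sw}x$ collapsed by $\psi$ and differentiates), whereas you go slightly further and compute $\operatorname{rank}D_x\psi=\dim\aff(P)$ at every point; this refinement is correct and harmless, but not needed for the lemma as stated.
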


\begin{proof}
Assume $\psi(\exp(w)) = \psi(\exp(w'))$ for $w\ne w'\in\R^n$  
Then there are $c_i\in\R$ such that for all $a_i \in A_i$ we have that  
$\langle a_i, w-w' \rangle = c_i$. Hence, 
$\langle x, w-w' \rangle = c_i$ for all $x_i\in P_i$. 
It follows that $\langle x, w-w' \rangle = c_1+\ldots+c_n$ for all 
$x\in P$. Hence $\dim P <n$. 

Conversely, assume there is a nonzero $w\in\R^n$ and $c\in\R$ such that 
$\langle x, w\rangle =c$ for all $x\in P$. 
Then there are $c_i\in\R$ such that 
$\langle x_i, w\rangle =c_i$ for all $x_i\in P_i$. 
It follows that for any $x\in\R^n_{>0}$ and any $s\in\R$ we have 
\begin{eqnarray*}
 \psi_i(e^{sw}x) &= [ (e^{sw})^{a_i} x^{a_i} ]_{a_i \in A_i} = [ e^{s\langle a_i, w\rangle} x^{a_i}]_{a_\in A_i} \\
   &=[ e^{sc_i}x^{a_i} ]_{a_\in A_i}= \psi_i(x).
\end{eqnarray*}
Hence $\psi$ is not injective. 
Moreover, $w$ is in the kernel of the 
derivative of $\psi_i$ at $x$. 
\end{proof}

We denote by $Z$ the image of $\psi$. 
Then we can write 
$$
 \E(A_1,\ldots,A_n) = \E_{g\in G} \#(Z\cap g_1H_1\cap \ldots \cap g_nH_n) ,
$$ 
where the hypersurfaces $H_i$ are defined in~\eqref{eq:shape}. 
By Theorem~\ref{th:KF} and \eqref{eq:topsy},  
this can be expressed as 
\begin{equation}\label{eq:NAI}
 \E(A_1,\ldots,A_n) = (2\pi)^{-\frac{n}{2}} \rho_{m_1}\cdots \rho_{m_n} \int_{\R^n_>}(\overline{\s}_Z\circ \psi) J\psi \, dx .
\end{equation}
We make the coordinate change $\R^n\to\R^n_{>0}, (w_1,\ldots,w_n)\mapsto x = (e^{-w_1},\ldots, e^{-w_n})$,
which has the absolute Jacobian $x_1\cdots x_n$, and obtain 
(slightly abusing notation) 
\begin{equation}\label{eq:Se-Int}
 \int_{\R^n_>}(\overline{\s}_Z\circ \psi) J\psi \, dx = \int_{\R^n} x_1\cdots x_n (\overline{\s}_Z\circ \psi) J\psi  \, dw . 
\end{equation}


Recall from Subsection~\ref{se:NF} that each vertex $v$ of $P$ defines 
the inner normal cone $C_v:=P^*_v$. 
We can write 
\begin{equation}\label{eq:decomp}
 \R^n = \bigcup_v C_v
\end{equation} 
as the union over the vertices~$v$ of $P$.
Moreover, we know that $\dim (C_v\cap C_{v'}) < n$ for different vertices $v,v'$.
Therefore, we can rewrite~\eqref{eq:Se-Int} as the sum
$$
 \sum_v \int_{C_v}  x_1\cdots x_n(\overline{\s}_Z\circ \psi) J\psi  \, dw . 
$$ 
over the $V_0$ many vertices $v$ of $P$.

Fix now a vertex~$v$ of $P$. 
According to Lemma~\ref{le:V-sum-polytope}, 
there are vertices $v_i$ of~$P_i$, for $i=1,\ldots,n$, satisfying 
$v=v_1+\ldots+v_n$. Note that $a_i \in A_i$. 

We define the map $\varphi_i\colon \R^n_{>0} \to \R^{A_i \setminus\{v_i\}}$ by 
$$
 \varphi_i(x) = ( x^{a_i- v_i})_{a_i \in A_i \setminus\{v_i\}} \in \R^{A_i\setminus\{v_i\}} \simeq \R^{m_i} .
$$
Note that $\varphi_i$ expresses $\psi_i$ in the affine chart 
$$
\proj(\R^{A_i})_{y_{iv_i}\ne 0}\to \R^{a_i\in A_i \setminus\{v_i\}}, \, 
 [y_{ia_i}]_{a_i\in A_i} \mapsto \frac{1}{y_{iv_i}} (y_{i a_i})_{a_i\in A_i\setminus\{v_i\}} . 
$$
So we are in the setting of Subsection~\ref{se:Bd} and 
$\varphi_i$ is an instance of~\eqref{eq:def-varphi}.
The rows of the matrix  $M(x) := D_x\varphi$ are labeled 
by the disjoint union $A_1\sqcup\ldots \sqcup A_n$ and $M(x)$ 
has $n$ columns. 
For any $n$-tuple $(a_1,\ldots,a_n)$ with  
$a_i \in  A_i \setminus\{v_i\}$, we denote by 
$M(x)_{a_1,\ldots,a_n}$ the $n\times n$ submatrix of $M(x)$, 
obtained by selecting from $M(x)$ the rows numbered by 
$a_1,\ldots,a_n$. 
We apply Lemma~\ref{le:select} to bound 
\begin{eqnarray*}
  \lefteqn{ \rho_{m_1}\cdots \rho_{m_n} \, \int_{C_v}  x_1\cdots x_n (\overline{\s}_Z\circ \psi) J\psi  \, dw} \\
  &\le \sum_{a_1,\ldots,a_n} 
  \int_{C_v}  x_1\cdots x_n |\det M(x)_{a_1,\ldots,a_n}| \, dw ,
\end{eqnarray*}
where the sum runs over all tuples $(a_1,\ldots,a_n)$ with  
$a_i \in  A_i \setminus\{v_i\}$. So there are 
$m_1\cdots m_n$ many summands.  
To prove Theorem~\ref{th:main-mixed-real}, 
it is  sufficient to show that 
\begin{equation}\label{eq:STS}
  \int_{C_v}  x_1\cdots x_n |\det M(x)_{a_1,\ldots,a_n}| \, dw \ \le\ 1 
\end{equation}
for each vertex $v$ and each selection $(a_1,\ldots,a_n)$. 

The component (row) of the derivative $D_x \varphi_i$ corresponding to 
$a_i \in  A_i \setminus\{v_i\}$ is given by 
$$
 (D_x \varphi_i)_{a_i} =  x^{a_i -v_i} (a_i -v_i) \diag(x_1^{-1},\ldots,x_n^{-1}) .
$$
Hence the  $n\times n$-submatrix 
$M(x)_{a_1,\ldots,a_n}$ of $M(x)$ 
is given by 
\begin{eqnarray*}
 \lefteqn{ M(x)_{a_1,\ldots,a_n} = } \\ 
 & \diag(x^{a_1 -v_1},\ldots, x^{a_n -v_n}) 
  \begin{bmatrix} a_1 -v_1 \\ \vdots \\ a_n -v_n \end{bmatrix}
  \diag(x_1^{-1},\ldots,x_n^{-1}) .
\end{eqnarray*}
Therefore, setting $b_i:=a_i -v_i$, we get 
$$
x_1\cdots x_n \det (M(x)_{a_1,\ldots,a_n}) = x^{b_1+\ldots + b_n} \det [b_1,\ldots,b_n] .
$$
Let us write $\Pi_i$ for the cone of $P_i$ at the vertex $v_i$. 
By definition, $b_i \in \Pi_i^*$. 
By Lemma~\ref{le:V-sum-polytope}, 
$\Pi:=\Pi_1+\ldots+\Pi_n$ equals the cone of the polytope 
$P=P_1+\ldots+ P_n$ at the vertex $v=v_1+\ldots+v_n$. 
Hence $b_i \in \Pi_i^* \subseteq \Pi_1^* \cap\ldots\cap \Pi_n^* = \Pi^* =C_v$.

We can therefore rewrite the left-hand side of \eqref{eq:STS} as 
\begin{equation}\label{eq:anschluss}
  \begin{split}
    \int_{C_v}  x_1\cdots x_n |\det M(x)_{a_1,\ldots,a_n}| \, dw \\
    = \int_{C_v}  e^{-\langle b_1+\ldots +b_n, w\rangle} |\det [b_1,\ldots,b_n]| \, dw .
 \end{split}
\end{equation}
By Proposition~\ref{pro:crucial-trick}, this is at most~$1$.
This shows claim \eqref {eq:STS} and 
finishes the proof of Theorem~\ref{th:main-mixed-real}. \qed

\subsection{Proof of Proposition~\ref{pro:MVR}}

For finite $S_i\subseteq\R$, put  $A:=S_1\times\ldots\times S_n$, 
and consider 
\begin{equation}
  \begin{split}
 \psi_i &\colon\R_{>0}\to\proj(\R^{S_i}), x_i \mapsto [x_i^{a_i}]_{a_i \in S_i} ,\\
 \psi &\colon\R^n_{>0}\to\proj(\R^{A}), x \mapsto [x^{a}]_{a\in A}
 \end{split}
\end{equation}
with images $Z_i$ and $Z$, respectively.
The kinematic formula for real projective space \cite[Cor.~A.3]{BL:19} gives 
\begin{equation*}
 \E(S_i) = \frac{\vol(Z_i)}{\vol(\proj^1)}, \quad   
 \E(A,\ldots,A) = \frac{\vol(Z)}{\vol(\proj^n)} .
\end{equation*}
The key insight is that $Z$ is obtained as the image of $Z_1\times\ldots\times Z_n$ 
under the {\em Segre embedding} 
$$
 \proj(\R^{S_1})\times\ldots\times\proj(\R^{S_n}) \to \proj(\R^{S_1}\otimes\ldots\otimes \R^{S_n}) \simeq \proj(\R^{A}) ,
$$
which is isometric (see Appendix~\ref{se:A2}).  
Therefore, we have 
$\vol(Z) = \vol(Z_1)\cdots \vol(Z_n)$, 
which completes the proof of Proposition~\ref{pro:MVR}. \qed

\subsection{Proof of Proposition~\ref{th:main-unmixed-real}}

Given is a finite subset $A\subseteq\R^n$ with convex hull $P$. 
By Lemma~\ref{le:psi-inj} we can can w.l.o.g.\ assume that $\dim P=n$.  
Consider the injective map 
\begin{equation}\label{eq:def-psi}
 \psi \colon \R^n_{>0} \to \proj(\R^{A}), \,  \psi(x) := [x^{a}]_{a\in A} 
\end{equation}
with image $Z\subseteq \proj(\R^{A})$. 
The kinematic formula for real projective space
\cite[Cor.~A.3]{BL:19} 
is considerably simpler than the one in 
Theorem~\ref{th:KF}, since $O(m)$ acts transitively 
on the Grassmann manifolds $\Gr(k,\R^m)$: we have 
\begin{equation}\label{eq:unm-IG}
 \E(A,\ldots,A) = \frac{\vol(Z)}{\vol(\proj^n)} 
 = \frac{1}{\vol(\proj^n)} \int_{\R^n_{>0}} J\psi(x)\, dx .
\end{equation}
We now proceed as in the proof of Theorem~\ref{th:main-mixed-real}.
We make the coordinate change $x=e^{-w}$ and 
decompose the resulting integral according to the decomposition~\eqref{eq:decomp}
of $\R^n$ into the full dimensional cones~$C_v$ corresponding to vertices~$v$.
Thus
\begin{eqnarray*}
  \lefteqn{\int_{\R^n_{>0}} J\psi(x)\, dx = \int_{\R^n} x_1\cdots x_nJ\psi(x)\, dw }\\
 &= \sum _{C_v}  \int_{C_v} x_1\cdots x_nJ\psi(x)\, dw 
\end{eqnarray*}
For a fixed vertex~$v$ of~$P$, 
we consider the map 
$\varphi\colon\R^n_{>0} \to \R^{A\setminus\{v\}}$ defined by 
\begin{equation}\label{def:varphi}
 \varphi(x) = ( x^{a- v})_{a\in A \setminus\{v\}} .
\end{equation}
Then we have $\psi(x) = \pi(\varphi(x))$, 
where $\pi$ is the inverse of the chart 
$\proj(\R^A)_{y_v\ne 0} \to \R^{A\setminus\{v\}}$. 
It is easy to verify that $J\psi(x) \le J\varphi(x)$
using $\|D_{\varphi(x)}\pi\| \le 1$, see~\eqref{eq:der-pi}.

Le us view $\caM(x) := D_x\varphi$ as a matrix 
whose rows are labelled by elements of $A\setminus\{v\}$. 
and denote by $\cM(x)_{a_1,\ldots,a_n}$ the submatrix 
of $\cM(x)$ obtained by selecting the rows 
labelled by the~$a_i$. 
Cauchy-Binet implies that 
$$
 J\varphi(x)^2 = \det(\cM(x)^T \cM(x)) = 
  \sum_{a_1,\ldots,a_n}  (\det \cM(x)_{a_1,\ldots,a_n})^2, 
$$
with the sum running over all $n$-element subsets $\{a_1,\ldots,a_n\}$ 
of $A\setminus\{v\}$, of which there are ${t-1 \choose n}$ many.
This implies 
$J\varphi(x) \ \le\  \sum_{a_1,\ldots,a_n}  |\det \cM(x)_{a_1,\ldots,a_n} |$. 
We have arrived at 
\begin{eqnarray*}
  \lefteqn{\int_{C_v}  x_1\cdots x_n J\psi(x) \, dw}  \\
  & \le\ \sum_{a_1,\ldots,a_n} 
  \int_{C_v}  x_1\cdots x_n |\det \caM(x)_{a_1,\ldots,a_n}| \, dw  
   \le\ {t-1 \choose n} ,
\end{eqnarray*}
where the right-hand inequality follows from Proposition~\ref{pro:crucial-trick}
as in \eqref{eq:anschluss}. 
\qed

\subsection{Proof of Theorem~\ref{th:real_concentrate_roots}} 

The key observation is the following. 
Define for~$\e>0$ 
$$
 D_\e:= \{x\in\R^n \mid \|x\|\ge \e \}.
$$

\begin{lemma}\label{le:lim-eI}
Let $C\subseteq\R^n$ be a proper cone, $d\in\inte(C^*)$, and $\e>0$. Then 
$$
 \lim_{m\to\infty }m^n \int_{C\cap D_\e} e^{-m\langle d,w\rangle} \ dw = 0
$$
\end{lemma}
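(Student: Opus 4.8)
\textbf{Proof plan for Lemma~\ref{le:lim-eI}.}
The plan is to reduce the integral over the cone $C$ to an integral over a standard quadrant and then estimate directly. First I would apply a linear change of variables: since $d \in \inte(C^*)$, the cone $C$ is proper, so I can pick a basis $u_1,\ldots,u_n$ of $\R^n$ contained in $C$, giving $g\in\GL(n,\R)$ with $g(\R^n_{\ge 0}) \subseteq C$. Actually, more useful is to go the other way and enlarge: since $C$ is contained in a halfspace and $d$ lies in the interior of its dual, the linear functional $w\mapsto \langle d,w\rangle$ is strictly positive on $C\setminus\{0\}$, hence there is $\delta>0$ with $\langle d,w\rangle \ge \delta\|w\|$ for all $w\in C$ (this is just compactness of $C\cap S^{n-1}$). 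Then on $C\cap D_\e$ we have the pointwise bound $e^{-m\langle d,w\rangle} \le e^{-m\delta\|w\|/2}\,e^{-m\delta\|w\|/2} \le e^{-m\delta\e/2}\, e^{-m\delta\|w\|/2}$, the last step using $\|w\|\ge \e$ in the first factor.

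Next I would pull out the exponentially small prefactor and bound what remains:
$$
 m^n \int_{C\cap D_\e} e^{-m\langle d,w\rangle}\, dw
 \ \le\ m^n e^{-m\delta\e/2} \int_{\R^n} e^{-m\delta\|w\|/2}\, dw .
$$
The remaining integral is elementary: substituting $w = (2/(m\delta))\,w'$ gives $\int_{\R^n} e^{-m\delta\|w\|/2}\, dw = (2/(m\delta))^n \int_{\R^n} e^{-\|w'\|}\, dw' = c_n\, m^{-n}$ for a constant $c_n$ depending only on $n$ and $\delta$. Hence the whole expression is bounded by $c_n\, e^{-m\delta\e/2}$, which tends to $0$ as $m\to\infty$. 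The factor $m^n$ is exactly cancelled by the scaling of the Gaussian-type integral, and the genuine decay comes from the $e^{-m\delta\e/2}$ term, which is where the hypothesis $\e>0$ is used.

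The only real point requiring care is the existence of $\delta>0$ with $\langle d,w\rangle\ge\delta\|w\|$ on $C$; this is the one place where properness of $C$ and $d\in\inte(C^*)$ enter, and it follows because $C\cap S^{n-1}$ is compact and $w\mapsto\langle d,w\rangle$ is continuous and strictly positive there (strict positivity on all of $C\setminus\{0\}$ is precisely the statement $d\in\inte(C^*)$). Everything else is a routine change of variables in a rotationally symmetric integral. I do not anticipate a genuine obstacle; the lemma is really an observation, and the proof is short once the linear lower bound on $\langle d,\cdot\rangle$ is in hand.
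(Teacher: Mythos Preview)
Your argument is correct. You extract a linear lower bound $\langle d,w\rangle\ge\delta\|w\|$ on $C$ from compactness of $C\cap S^{n-1}$ and $d\in\inte(C^*)$, split the exponential, and bound the remaining integral by a scaling argument; this even yields an explicit exponential rate $O(e^{-m\delta\e/2})$.

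The paper takes a shorter, softer route: substitute $u=mw$ first, which turns the expression into $\int_{C\cap D_{m\e}} e^{-\langle d,u\rangle}\,du$, and then observes that this tends to~$0$ by dominated (or monotone) convergence, since $e^{-\langle d,\cdot\rangle}$ is integrable over $C$ (this integrability is exactly $v_C(d)<\infty$, which holds because $d\in\inte(C^*)$) and the sets $C\cap D_{m\e}$ shrink to~$\varnothing$. So the paper uses the finiteness of the characteristic function as a black box where you instead unpack the geometry of the cone into the explicit bound $\langle d,w\rangle\ge\delta\|w\|$. Your approach is slightly longer but more quantitative; the paper's is a two-line application of measure continuity. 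Both rely on the same underlying fact about $d\in\inte(C^*)$, just packaged differently.
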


\begin{proof}
Since $\cap_{m\ge 1} D_{m\e} = \varnothing$, 
basic integration theory implies 
$$
 \lim_{m\to\infty} \int_{C\cap D_{m\e}} e^{-\langle d,u\rangle} \ du = 0.
$$
Making the change of variables $u=mw$ shows the claim. 
\end{proof}

We now observe the following. Let $U\subseteq\R^n_{>0}$ be open. 
Analogously as for~\eqref{eq:NAI}, one shows that 
$$
 (2\pi)^{-\frac{n}{2}} \rho_{m_1}\cdots \rho_{m_n} 
  \int_{U} x_1\cdots x_n(\overline{\s}_Z\circ \psi) J\psi \, dw .
$$
equals the expected number of nondegenerate zeros {\em in $U$} of the random system~\eqref{eq:def_F_Syst}.

We follow the proof of Theorem~\ref{th:main-mixed-real}. 
Note that stretching the support does not change the Newton polytopes $P_i$ and $P=P_1+\ldots+P_n$.
Fix a vertex $v$ of $P$. According to Lemma~\ref{le:V-sum-polytope},  
there are vertices $v_i$ of~$P_i$, for $i=1,\ldots,n$, satisfying 
$v=v_1+\ldots+v_n$. 
Tracing the proof of Theorem~\ref{th:main-mixed-real}, one sees that 
it is sufficient to show that (compare \eqref{eq:anschluss}) 
for any selection $a_1\in A_1\setminus\{v_1\},\ldots,a_n\in A_n\setminus\{v_n\}$, 
the vectors $b_i= a_i -v_i$ satisfy 
\begin{equation*}
 \lim_{m\to\infty } \int_{C_v}  e^{-m\langle b_1+\ldots +b_n, w\rangle} |\det [mb_1,\ldots,mb_n]| \, dw = 0.
\end{equation*}
However, this is a consequence of Lemma~\ref{le:lim-eI}. \qed


\begin{acks}
We thank the referees for their comments, in particular for pointing out 
an error in the interpretation of the bound in Proposition~\ref{th:main-unmixed-real}. 
The author is supported by the ERC under the European Union's Horizon 2020 research 
and innovation programme (grant agreement no. 787840).
\end{acks}


%
%



\appendix

\section{Proof of Proposition~\ref{prop:sigma-symm}}\label{se:A0}
Let $\nu_1,\ldots,\nu_m$ be an orthonormal basis of $V^\perp$. 
We decompose $\nu_i= \nu'_i + \nu''_i$ according to $E=W \oplus W^\perp$. 
Then $p(\nu_i)= \nu'_i$ and $|\det p\,| = \|\nu'_1\wedge\ldots\wedge \nu'_m\|$. 
If $\omega_1,\ldots,\omega_k$ denotes an orthonormal basis of $W^\perp$, we have
\begin{eqnarray*}
  \lefteqn{ \sigma(V^\perp,W^\perp) = \|\nu_1\wedge\ldots \wedge \nu_m\wedge \omega_1\wedge\ldots \wedge \omega_k\|} \\
 &= \|\nu'_1\wedge\ldots \wedge \nu'_m\wedge \omega_1\wedge\ldots \wedge \omega_k\|  
 &= \|\nu'_1\wedge\ldots \wedge \nu'_m\| ,
\end{eqnarray*}
the last equality holding since 
the span of the $\nu'_i$ equals $W$, which is 
orthogonal to the span of the~$w_j$, which is $W^\perp$. 
This proves $\sigma(V^\perp,W^\perp) = |\det p\,|$.

For the second assertion, we use that 
$|\det p\,| = |\det q\,|$, where $q\colon W^\perp \to V$ 
denotes the restriction of the orthogonal projection $E\to  V$ to $W^\perp$, 
see \cite[Lemma~5.4]{rigid-II}. \qed

\section{Proof of Lemma~\ref{le:partial_volume}}\label{se:A1}
We fix $x\in \inte(C^*)$. 
For $t\ge 0$ we define the $n-1$-dimensional slice 
$$
 C_t := \big\{y\in C \mid \langle x, y \rangle = t \|x\| \big\} . 
$$
By Fubini, we get 
\begin{eqnarray*}\label{eq.Fubini}
 v_C(x) &= \int_C e^{-\langle x,y\rangle} \, dy = \int_0^\infty \vol_{n-1}(C_t) e^{-t\|x\|} dt \\
 &= \vol_{n-1}(C_1) \int_0^\infty  t^{n-1} e^{-t\|x\|} dt .
\end{eqnarray*}
Note that 
\begin{eqnarray*}
 \int_0^\infty  t^{n-1} e^{-t\|x\|} dt  = \frac{1}{\|x\|^n} \int_0^\infty  s^{n-1} e^{-s} ds 
   = \frac{(n-1)!}{\|x\|^n} . 
\end{eqnarray*}
Moreover, we have 
\begin{eqnarray*}
 \vol_{n-1}(C_1) &=  n\, \vol_{n}\big\{y\in C \mid \langle x, y \rangle \le \|x| \big\} \\
  &= n\, \|x\|^{n} \, \vol_{n}\big\{y\in C \mid \langle x, y \rangle \le 1 \big\} .
\end{eqnarray*}
It follows that 
\begin{eqnarray*}
 v_C(x) &= n\, \|x\|^{n} \, \vol_{n}\big\{y\in C \mid \langle x, y \rangle \le 1 \big\} \ \frac{(n-1)!}{\|x\|^n} \\
  &= n!\, \vol_{n}\big\{y\in C \mid \langle x, y \rangle \le 1 \big\} ,
\end{eqnarray*}
completing the proof.
\qed

\section{The Segre embedding is isometric}\label{se:A2}

Consider the Segre embedding 
$$
 S\colon\proj(\R^{m}) \times\proj(\R^{n}) \to \proj(\R^{m\times n}),\ 
  ([x],[y]) \mapsto [x_i y_j] .  
$$
It is well known that $S$ is a smooth embedding. 
If we endow the real projective space with the standard Riemannian metric 
(see \S~\ref{se:Pproj}), then $S$ is isometric. 
This is also true for the Segre embedding with several factors. 
We provide the proof for lack of reference.

\begin{prop}
The Segre embedding is isometric. 
\end{prop}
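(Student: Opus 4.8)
The plan is to work with unit-norm representatives throughout, which collapses all the conformal factors in the metrics and reduces the statement to a one-line computation with the tensor inner product. First I would fix unit vectors $x\in\R^m$ and $y\in\R^n$ and observe that $x\ot y\in\R^{m\times n}\simeq\R^m\ot\R^n$ is then also a unit vector, so by the definition of the standard metric in Subsection~\ref{se:Pproj} the inner product on $T_{[x\ot y]}\proj(\R^{m\times n})=\R(x\ot y)^\perp$ is simply the restriction of the standard inner product of $\R^m\ot\R^n$, for which $\langle a\ot b,c\ot d\rangle=\langle a,c\rangle\langle b,d\rangle$. Similarly, since $\|x\|=\|y\|=1$, the product Riemannian metric on $T_{([x],[y])}\big(\proj(\R^m)\times\proj(\R^n)\big)=x^\perp\oplus y^\perp$ is just $\|(u,v)\|^2=\|u\|^2+\|v\|^2$.

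Next I would compute the differential of $S$ at $([x],[y])$. Differentiating $t\mapsto[x(t)\ot y(t)]$ along curves with $x(0)=x$, $\dot x(0)=u\in x^\perp$, $y(0)=y$, $\dot y(0)=v\in y^\perp$ yields
$$
 D_{([x],[y])}S\,(u,v)=u\ot y+x\ot v .
$$
The one point that needs checking is that this vector lies in $(x\ot y)^\perp$, so that it genuinely represents the image tangent vector in the chart $\R(x\ot y)^\perp$: indeed $\langle u\ot y+x\ot v,\,x\ot y\rangle=\langle u,x\rangle\|y\|^2+\|x\|^2\langle v,y\rangle=0$ because $u\perp x$ and $v\perp y$, so no additional orthogonal projection is needed.

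It then remains to check that $D_{([x],[y])}S$ preserves norms. By bilinearity and $\|x\|=\|y\|=1$,
$$
 \|u\ot y+x\ot v\|^2=\|u\|^2\|y\|^2+2\langle u,x\rangle\langle y,v\rangle+\|x\|^2\|v\|^2=\|u\|^2+\|v\|^2 ,
$$
the cross term vanishing again by $u\perp x$. This is exactly $\|(u,v)\|^2$ in the product metric, so $D_{([x],[y])}S$ is a linear isometry; since $([x],[y])$ was arbitrary, $S$ is an isometric embedding.

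Finally, the several-factor Segre embedding $\proj(\R^{m_1})\times\cdots\times\proj(\R^{m_k})\to\proj(\R^{m_1}\ot\cdots\ot\R^{m_k})$ is handled by the identical argument: with unit representatives $x_i$, the differential sends $(u_1,\ldots,u_k)$ with $u_i\in x_i^\perp$ to $\sum_{i=1}^k x_1\ot\cdots\ot u_i\ot\cdots\ot x_k$; for $i\ne j$ the $i$th and $j$th summands are orthogonal because their inner product carries the factor $\langle u_i,x_i\rangle=0$, and each summand has squared norm $\|u_i\|^2\prod_{l\ne i}\|x_l\|^2=\|u_i\|^2$, so the image has squared norm $\sum_i\|u_i\|^2$, matching the product metric. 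I do not expect any real obstacle here: the only thing to be careful about is the bookkeeping of the various identifications (the tensor inner product on $\R^{m\times n}$, the tangent-space model $\R y^\perp$, and the fact that unit representatives make all conformal factors equal to~$1$).
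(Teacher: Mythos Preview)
Your argument is correct and is essentially the same computation as the paper's: both verify that the differential of $S$ sends $(u,v)$ with $u\perp x$, $v\perp y$ to $u\ot y + x\ot v$ and that the cross term vanishes by orthogonality. The only cosmetic difference is that the paper first invokes the transitive $O(m)\times O(n)$-action to reduce to the single point $([e_0],[e_0])$ and then writes the derivative as a block matrix, whereas you stay at a general point and use unit representatives with tensor notation; your version has the mild advantage that the several-factor case falls out without any extra words.
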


\begin{proof}
For notational simplicity, we  restrict ourselves to the case of two factors 
We need to show that the derivatives of $S$ preserve the inner products. 
By orthogonal invariance, it suffices to consider the derivative at 
$([e_0],[e_0])$, which is mapped to $[E_{00}]$. We can 
isometrically identify 
the tangent spaces at these points with 
$\R^{m-1}\times\R^{n-1}$ and $\R^{mn-1}$, respectively.  
Then derivative of $S$ at $([e_0],[e_0])$ is given by 
$$
\R^{m-1}\times\R^{n-1} \to \R^{m\times n}, \,
(v,w) \mapsto 
\begin{bmatrix}
 0   & w^T \\
 v & 0 
\end{bmatrix} .
$$
Clearly, this map preserves the inner products.
\end{proof}


\section{Supplement}\label{se:A3} 

It is instructive to see how \eqref{eq:unm-IG} directly
follows from the more general kinematic formula in Theorem~\ref{th:KF}.
Consider the injective map $\psi$ from~\eqref{eq:def-psi} 
with image $Z\subseteq\proj(\R^A)$. 
We use $\psi$ to define the map 
\begin{equation}\label{eq:psi-d}
\psi_d\colon \R^n_{>0} \to (\proj(\R^A))^n,\, x\mapsto (\psi(x),\ldots,\psi(x)).  
\end{equation}
The image 
$Z_d = \{(y,\ldots,y) \mid y \in Z \} \subseteq (\proj^m)^n$ 
of $\psi_d$
is the diagonal embedding of $Z$ in the product of 
projective spaces. 
By Theorem~\ref{th:KF} and \eqref{eq:topsy} we have 
$$
 \E(A,\ldots,A) =  (2\pi)^{-\frac{n}{2}} \rho_m^n
  \int_{\R^n_{>0}} (\overline{\s}_{Z_d} \circ \psi_d) J\psi_d\, dx .
$$
Via Lemma~\ref{le:sd} below, we indeed conclude that 
$$
 \E(A,\ldots,A) =    \frac{1}{\vol(\proj^n)} \int_{\R^n_{>0}} J\psi\, dx =
  \frac{\vol(Z)}{\vol(\proj^n)}  ,
$$
which is \eqref{eq:unm-IG}. 

\begin{lemma}\label{le:sd}
For $x\in\R^n_{>0}$ 
we have 
$$
\mbox{$\rho_m^{n}\, \overline{\s}_{Z_d}(\psi_d(x)) J\psi_d(x) 
 =   \frac{(2\pi)^{\frac{n}{2}}}{\vol(\proj^n)} J\psi(x)$.} 
$$
\end{lemma}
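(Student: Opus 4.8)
The plan is to apply Lemma~\ref{le:asf} to the diagonal map $\psi_d$ of~\eqref{eq:psi-d} and then evaluate the resulting Gaussian expectation explicitly. Since all $n$ components of $\psi_d$ equal $\psi$, we have $D_x(\psi_d)_i = D_x\psi$ for every $i$, so Lemma~\ref{le:asf} (applied with $m_1=\cdots=m_n=m$) gives directly
$$
 \rho_m^n\, \overline{\sigma}_{Z_d}(\psi_d(x))\, J\psi_d(x) \;=\; \E_{\lambda_1,\ldots,\lambda_n}\left\| (\lambda_1\circ D_x\psi)\wedge\cdots\wedge(\lambda_n\circ D_x\psi)\right\| ,
$$
where $\lambda_1,\ldots,\lambda_n$ are independent standard Gaussian linear forms on $T_{\psi(x)}\proj^m$. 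Fixing an isometric identification $T_{\psi(x)}\proj^m\cong\R^m$ and writing $M:=D_x\psi\in\R^{m\times n}$, the linear form $\lambda_i\circ D_x\psi$ corresponds to the vector $M^T\lambda_i\in\R^n$, so the right-hand side equals $\E\,|\det[M^T\lambda_1,\ldots,M^T\lambda_n]|$ with each $\lambda_i\sim N(0,I_m)$. If $\mathrm{rank}\,M<n$ this vanishes and so does $J\psi(x)$, so I may assume $M$ has rank~$n$.

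Next I would separate the ``shape'' of $M$ from a universal Gaussian factor. Let $V:=\im M$ and let $P_V$ be the orthogonal projection onto $V$; since $M^T$ annihilates $V^\perp$, we have $M^T\lambda_i = M^T P_V\lambda_i$. Choosing an orthonormal basis $e_1,\ldots,e_n$ of $V$ and setting $B:=[\,M^Te_1,\ldots,M^Te_n\,]\in\R^{n\times n}$, one gets $M^T\lambda_i = B g^{(i)}$ with $g^{(i)}:=(\langle\lambda_i,e_1\rangle,\ldots,\langle\lambda_i,e_n\rangle)$ standard Gaussian in $\R^n$. Hence $[M^T\lambda_1,\ldots,M^T\lambda_n]=BG$ for a standard Gaussian $G\in\R^{n\times n}$, and the expectation factors as $|\det B|\cdot\E\,|\det G|$. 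Moreover $\det(B^TB)=\det(e_i^T MM^T e_j)_{ij}=\det\!\big(MM^T|_V\big)$, which is the product of the nonzero eigenvalues of $MM^T$, i.e.\ $\det(M^TM)=J\psi(x)^2$; thus $|\det B|=J\psi(x)$.

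It remains to compute $\E\,|\det G|$ for a standard Gaussian $G\in\R^{n\times n}$. Interpreting $|\det G|$ as the volume of the parallelepiped spanned by the columns of $G$ and applying Gram--Schmidt, one gets $|\det G|=\prod_{k=1}^n\|h_k\|$, where $h_k$ is the component of the $k$-th column orthogonal to the span of the first $k-1$ columns; conditionally on the earlier columns, $h_k$ is a standard Gaussian in a space of dimension $n-k+1$, so $\E\,|\det G|=\prod_{k=1}^n\rho_k$. Finally, a telescoping computation gives $\prod_{k=1}^n\rho_k=2^{n/2}\Gamma(\tfrac{n+1}{2})/\sqrt{\pi}$, and combining this with $\vol(\proj^n)^{-1}=\Gamma(\tfrac{n+1}{2})\pi^{-(n+1)/2}$ yields $\prod_{k=1}^n\rho_k=(2\pi)^{n/2}/\vol(\proj^n)$; together with the previous paragraph this proves the lemma. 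The only step requiring care is the linear-algebra identity $|\det B|=J\psi(x)$ (equivalently, that the nonzero eigenvalues of $MM^T$ and $M^TM$ coincide); the remaining manipulations are routine.
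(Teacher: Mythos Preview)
Your proof is correct and follows essentially the same route as the paper: apply Lemma~\ref{le:asf} to the diagonal map, factor the resulting expectation as $J\psi(x)\cdot \E|\det G|$ for a standard Gaussian $G\in\R^{n\times n}$, and evaluate the latter as $\prod_{k=1}^n\rho_k=(2\pi)^{n/2}/\vol(\proj^n)$. The only cosmetic difference is that the paper achieves the factorization via the singular value decomposition of $D_x\psi$, whereas you do it by projecting onto $V=\im D_x\psi$ and checking $|\det B|=J\psi(x)$; these are equivalent.
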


\begin{proof} 
Lemma~\ref{le:asf} applied to the map $\psi_d$ from \eqref{eq:psi-d} gives 
\begin{eqnarray} \notag
\lefteqn{\rho_{m}^{n}\, \overline{\s}_{Z_d}(\psi_d(x)) J\psi_d(x)} \\ \label{eq:R(x)2}
&=\E_{\lambda_1,\ldots,\lambda_n} 
\left\| (\lambda_1 \circ D_x \psi) \wedge\ldots\wedge (\lambda_n\circ D_x \psi) \right\|
\end{eqnarray}
where the $\lambda_i$ are standard Gaussian linear forms on $T_{\psi(x)}\proj^m$. 
Take an isometry $T_{\psi(x)}\proj^m\simeq\R^m$, 
view $\lambda_i\in\R^m$ as a vector, and view 
$\Delta:=D_x \psi$ as a matrix in $\R^{m\times n}$. 
We note that 
$J\psi(x) = \sqrt{\det (\Delta^T\Delta)}$.
The right-hand side of~\eqref{eq:R(x)2} 
can be written as the expectation 
$\E_{\lambda_i} |\det R(x) |$, 
with the matrix  
\begin{equation}\label{eq:R(x)1}
R(x):= 
\begin{bmatrix}
\lambda_1^T \circ D_x \psi \\
\vdots\\
\lambda_n^T \circ D_x \psi 
\end{bmatrix}
=
\begin{bmatrix}
\lambda_1^T \\
\vdots \\
\lambda_n^T  \\
\end{bmatrix}
\cdot
\Delta .
\end{equation}
We thus need to prove that 
\begin{equation}\label{eq:EWR}
 \mbox{$\E_{\lambda_i} |\det R(x) | = \frac{(2\pi)^{\frac{n}{2}}}{\vol(\proj^n)} \sqrt{\det(\Delta^T\Delta)} $.}
\end{equation}

In order to show this, by the singular value decomposition, we may 
assume that 
$\Delta =\begin{bmatrix} D \\ 0\end{bmatrix}$, 
where $D=\diag(\s_1,\ldots,\s_n)$. 
Note that $\sqrt{\det(\Delta^T\Delta)} = \s_1\cdots\s_n$. 
Then \eqref {eq:R(x)1} can be written as 
$R(x)=\Lambda D$, where $\Lambda\in\R^{n\times n}$ 
is a standard Gaussian {\em square} matrix  
and we get 
$\E_\Lambda |\det(R(x))| = \s_1\cdots\s_n\, \E_w |\det \Lambda|$.
It is well known that   
$\E_\Lambda |\det \Lambda| = \rho_n\rho_{n-1}\cdots\rho_1$,
e.g., see \cite[Cor.~4.11]{Condition}. 
Moreover, 
$\rho_n\rho_{n-1} \cdots \rho_1 = \frac{(2\pi)^{\frac{n}{2}}}{\vol(\proj^n)}$,
since
$\rho_m = \sqrt{2\pi} \,\frac{\vol(\proj^{m-1})}{\vol(\proj^m)}$
by~\cite[Lemma~2.25]{Condition}.
We have thus verified~\eqref{eq:EWR}.
\end{proof}

\end{document}